\newtheorem{example}{Example}[section]
\newcommand{\be} {\begin{eqnarray}}
\newcommand{\ee} {\end{eqnarray}}
\newcommand{\bep} {\begin{eqnarray*}}
\newcommand{\eep} {\end{eqnarray*}}
\newcommand {\Hol}{\mathop{\rm Hol}\nolimits}
\newcommand {\Id}{\mathop{\rm Id}\nolimits}
\renewcommand {\Re}{\mathop{\rm Re}\nolimits}
\newcommand {\DD}{\mathcal{D}}
\newcommand {\Ff}{\mathcal{F}}
\newcommand {\A}{\mathcal{A}}
\newcommand{\R}{{\mathbb R}}
\newcommand{\N}{{\mathbb N}}
\newcommand{\C}{{\mathbb C}}
\newtheorem{remar}{Remark}[section]
\newtheorem{examp}{Example}[section]
\newtheorem{defin}{Definition}[section]
\newtheorem{corol}{Corollary}[section]
\newtheorem{propo}{Proposition}[section]
\newtheorem{theorem}{Theorem}[section]
\newtheorem{lemma}{Lemma}[section]
\newcommand{\rema}{\begin{remar}\rm}
\newcommand{\erema}{$\blacktriangleright$\end{remar}}
\newcommand{\exa}{\begin{examp}\rm}
\newcommand{\eexa}{$\blacktriangleright$\end{examp}}
\def\lwvec(#1 #2){\linewd 0.1
           \lvec(#1 #2)
           \linewd 0.05}
\begin{document}

\title[Linearization of semicocycles]{Linearization of holomorphic semicocycles in Banach spaces}

\author[M. Elin]{Mark Elin}

\address{Department of Mathematics,
         Ort Braude College,
         Karmiel 21982,
         Israel}

\email{mark$\_$elin@braude.ac.il}

\author[F. Jacobzon]{Fiana Jacobzon}

\address{Department of Mathematics,
         Ort Braude College,
         Karmiel 21982,
         Israel}

\email{fiana@braude.ac.il}

\author[G. Katriel]{Guy Katriel}

\address{Department of Mathematics,
         Ort Braude College,
         Karmiel 21982,
         Israel}

\email{katriel@braude.ac.il}

\begin{abstract}

We consider
holomorphic semicocycles on the open unit ball in a Banach space taking values in a Banach algebra (studied previously in \cite{EJK, EJK-multi}).
We establish criteria for a semicocycle to be linearizable, that is, cohomologically equivalent to one independent of the spatial variable.

\vspace{2mm}
{\footnotesize Key words and phrases: holomorphic mapping, generated semigroup, Cauchy problem, linearization problem.

2010 Mathematics Subject Classification: Primary 37F99, Secondary 58D25, 46G20.
}

\end{abstract}
\maketitle

\centerline\today

\section{ Introduction}\label{sect-intro}

In the theory of autonomous dynamical systems, the basic equivalence relation is conjugacy: two semigroups which are conjugate
describe the same dynamics, up to a re-parametrization of the space acted upon. Thus, it is important to classify semigroups up to conjugacy. In particular, when dealing with smooth dynamical systems, one is
interested in the question of {\it{linearization}}, that is, determining whether a semigroup is conjugate to a
linear one. This is motivated by the fact that autonomous linear dynamical systems are the most well-understood type of dynamical systems.

For (semi)cocycles over semigroups an analogous equivalence relation is that of {\it{cohomological equivalence}}, and
the corresponding class of simplest semicocycles is that of {\it{constant}} (that is, independent of the spatial variable) semicocycles. Therefore in this context the problem of
`linearization' is that of determining whether a given semicocycle is cohomologous to a constant one.

In this paper we study semicocycles over semigroups of holomorphic self-mappings of a domain in a complex Banach space (the books \cite{R-S1, E-R-S-19} and references therein can be used as  good sources for the state of the art on semigroups of holomorphic mappings).
While, under an assumption of uniform joint continuity (see \cite{EJK-multi})  such semicocycles can be defined as solutions of certain nonautonomous differential equations which cannot
be explicitly solved, if a semicocycle is cohomolgous to a constant one it has an explicit representation.
Our central question is:  {\it Which semicocycles are linearizable --- that is cohomolgous to constant semicocycles? }

 In the case where $X=\C$, the question of linearizability was studied in \cite{EJK}, where we obtained a condition which is sharp in a generic sense.

 In multi-dimensional settings, the situation is inherently more complicated, as is also the case with respect to
 the question of conjugacy of semigroups  (see, for example, \cite{B-E-S,E-R-S-04,R-S1}).
   We obtain some simple sufficient conditions for such a linearization to exist, in terms of the semicocycle generator. The methods we use are quite different from those  used in \cite{EJK}, which relied on power-series expansions.

  The next section is devoted to some preliminary results and to definitions of the objects of our study, while in Section~\ref{sect-lineari} we introduce the linearizaton problem for semicocycles and discuss its equivalence to the linearization of skew-product semiflows. Section~\ref{Section-linearization} contains the main results of the paper and provides some conditions entailing that a semicocycle  over a semigroup with an interior attractive fixed  point is linearizable.

\section{Semigroups and semicocycles}\label{Pre}
\setcounter{equation}{0}

In this section we introduce the main objects studied in the paper and present some preliminary results.

Let $X$ and $Y$ be two complex Banach spaces endowed with the norms $\|\cdot\|_X$ and $\|\cdot\|_Y$, respectively. Let $\DD\subset X$ and $\Omega\subset Y$ be domains (connected open sets). Recall that a mapping $F:\mathcal{D}\to \Omega$ is said to be holomorphic if it is Fr\'ech{e}t differentiable at each point $x\in \DD$. By $\Hol(\DD,\Omega)$ we denote the set of all holomorphic mappings on $\mathcal{D}$ with values in $\Omega \subset Y$. Also we denote $\Hol(\DD):=\Hol(\DD,\DD)$, the set of all holomorphic self-mappings of $\mathcal{D}$.

A bounded subset $\DD^* \subset\DD$ is said to lie strictly inside $\DD$ if it is bounded away from the boundary $\partial\DD$, that is,  $\displaystyle\inf_{x\in\mathcal{D}^*} {\rm dist}(x,\partial \mathcal{D})>0$. One of the surprising features of infinite-dimensional holomorphy is that the
inclusion $f\in\Hol(\mathcal{D},Y)$ does not imply that $f$ is bounded on all subsets $\mathcal{D}^*$ strictly inside $\DD$ (see \cite{Har, R-S1, R-S97, R-S98}).

We proceed with some general notions  concerning the continuity of an arbitrary family of mappings $\left\{f_t\right\}_{t\ge0}\subset\Hol(\DD, Y)$.

\begin{defin}\label{def-T-cont}
The family $\left\{f_t\right\}_{t\ge0}\subset \Hol(\DD, Y)$ is said to be
\begin{itemize}
\item {\bf{uniformly jointly continuous}} (UJC, for short) if for every point $(t_0,x_0)\in[0,\infty)\times\DD$
there exists a neighborhood $U$ of $x_0$  such that $f_t(x)\rightarrow f_{t_0}(x)$ as $t\to t_0$, uniformly on $U$.

\item {\bf locally uniformly continuous} ($T$-continuous, for short) if for every  $t_0\ge0$ and for every subset $\mathcal{D}^*$ strictly inside~$\DD$, we have  $f_t(x)\rightarrow f_{t_0}(x)$ as $t\to t_0$, uniformly on $\DD^*$.
%\[
%\sup_{x\in\mathcal{D}^*}\|f_t(x)-f_{t_0}(x) \| _Y
%\to0\quad\mbox{as}\quad t\to t_0.
%\]
\end{itemize}
\end{defin}

 As we will see below, these notions are closely connected with the differentiability of a family with respect to the parameter $t$.

%\subsection{Semigroups}
A central notion in the theory of dynamical systems is that  of one-parameter semigroups of mappings.

\begin{defin}\label{def-sg-hol}
A family $\mathcal{F} =\left\{F_t\right\}_{t\ge0}\subset\Hol(\DD)$ is called a one-parameter continuous semigroup
(semigroup, for short) on $\mathcal{D}$ if the following properties hold
\begin{itemize}
  \item [(i)]  $F_{t+s}=F_t \circ F_s$ for all $t,s\geq 0$;

  \item [(ii)]  for all $x\in \mathcal{D}$, $\displaystyle \lim_{t\to 0^+}F_t(x) =x.$
\end{itemize}
\end{defin}

Note that, the algebraic structure of semigroups implies that if  $F_t(x)\to x$ as $t \to 0^+$ uniformly on a neighborhood of every point in $\DD$ (respectively, uniformly on every subset strictly inside $\DD$), then it is UJC (respectively, $T$-continuous). One of the deep questions in semigroup theory is whether all semigroups are  differentiable with respect to the parameter $t$. In general, the answer is negative. Reich and Shoikhet (see \cite[Theorems 6.8--6.9]{R-S1}) proved the following criterion for differentiability.
\begin{theorem}\label{th-RS}
  Let $\Ff\subset\Hol(\DD)$ be a semigroup on a bounded domain in a complex Banach space $X$. Then $\Ff$ is $T$-continuous if and only if for each $x\in\DD$  the limit
 \begin{equation}\label{gener}
f (x)= \lim_{t\to 0^{+}}\frac{1}{t} \left[ F_t(x) -x\right]
\end{equation}
exists, uniformly on subsets strictly inside $\mathcal{D}$, and  then $f\in\Hol(\DD,X)$ is bounded on each subset strictly inside $\DD$.
\end{theorem}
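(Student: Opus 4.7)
The plan is to prove the two implications separately. For the direction ($\Leftarrow$), suppose the limit \eqref{gener} holds uniformly on every subset $\DD^*$ strictly inside $\DD$; then in particular $F_h\to\Id$ as $h\to 0^+$ uniformly on such subsets. To propagate this to $T$-continuity at an arbitrary $t_0>0$, I would write, for $0\le h\le t_0$,
\[
F_{t_0+h}(x)-F_{t_0}(x)=F_h(F_{t_0}(x))-F_{t_0}(x),\qquad F_{t_0}(x)-F_{t_0-h}(x)=F_h(F_{t_0-h}(x))-F_{t_0-h}(x),
\]
and check that both $F_{t_0}(\DD^*)$ and $\bigcup_{0\le h\le t_0/2}F_{t_0-h}(\DD^*)$ lie strictly inside $\DD$, uniformly in $h$. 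This rests on the invariance $F_s(\DD)\subset\DD$ together with a Schwarz-Pick-type contraction estimate for the Kobayashi pseudometric on the bounded domain $\DD$, which keeps strictly-inside subsets uniformly bounded away from $\partial\DD$ under $F_s$.

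For the harder direction ($\Rightarrow$), assume $\mathcal{F}$ is $T$-continuous. Since $\DD$ is bounded and $F_t(\DD)\subset\DD$, the family $\{F_t\}$ is uniformly bounded, and by the Cauchy estimates in Banach spaces the differentials $DF_t$ and higher derivatives are uniformly bounded on each $\DD^*$ strictly inside $\DD$. The key device is to consider the time-average
\[
g_\tau(x):=\frac{1}{\tau}\int_0^\tau F_s(x)\,ds,
\]
which exists as a Bochner integral by $T$-continuity, depends holomorphically on $x$, tends to $\Id$ uniformly on strictly-inside subsets as $\tau\to 0^+$, and (again by the Cauchy estimates) satisfies $Dg_\tau\to\Id$ in operator norm on such subsets --- hence $Dg_\tau(x)$ is boundedly invertible for $\tau$ small enough. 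The semigroup identity and a change of variables yield
\[
g_\tau(F_t(x))-g_\tau(x)=\frac{1}{\tau}\int_0^{t}\bigl(F_{u+\tau}(x)-F_u(x)\bigr)\,du,
\]
so upon dividing by $t$ and sending $t\to 0^+$, the right-hand side tends to $\tfrac{1}{\tau}(F_\tau(x)-x)$ by $T$-continuity of the integrand at $u=0$. A first-order Taylor expansion of $g_\tau$ gives $\tfrac{1}{t}(g_\tau(F_t(x))-g_\tau(x))=Dg_\tau(x)\cdot q(t)+O\bigl(\epsilon(t)\,\|q(t)\|\bigr)$, where $q(t):=\tfrac{1}{t}(F_t(x)-x)$ and $\epsilon(t):=\sup_{\DD^*}\|F_t(x)-x\|\to 0$; a short a-priori estimate then shows $\|q(t)\|$ stays bounded as $t\to 0^+$ and, inverting $Dg_\tau(x)$, identifies the uniform limit
\[
f(x)=\lim_{t\to 0^+}\tfrac{1}{t}(F_t(x)-x)=Dg_\tau(x)^{-1}\cdot\tfrac{1}{\tau}\bigl(F_\tau(x)-x\bigr),
\]
which must in fact be independent of $\tau$. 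Holomorphy and boundedness of $f$ on $\DD^*$ follow from the uniform convergence.

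The main obstacle is precisely this reverse direction: one has to construct $f$ in the absence of Montel's theorem. In finite dimensions, extracting a normally convergent subsequence from $\bigl\{\tfrac{1}{t_n}(F_{t_n}-\Id)\bigr\}$ and using the semigroup identity to pin down the limit would close the argument immediately, but in a general complex Banach space this compactness is unavailable; the averaging construction above circumvents the problem by producing $f$ via an explicit integral formula, relying only on the boundedness of $\DD$ and the Cauchy estimates.
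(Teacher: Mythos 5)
The paper itself does not prove this statement --- it is quoted from Reich and Shoikhet \cite[Theorems 6.8--6.9]{R-S1} --- so there is no internal proof to compare against. Your argument for the hard implication ($\Rightarrow$) is correct and genuinely different from the classical one: Reich and Shoikhet extract the generator via iterated Cauchy-inequality estimates on $\|F_t(x)-x\|$, whereas you import the time-averaging device familiar from $C_0$-semigroup theory. The steps check out: $g_\tau$ is holomorphic and bounded; $Dg_\tau\to\Id$ by the Cauchy estimate applied on a slightly larger strictly-inside set; the integral identity is an elementary change of variables; the quantity $\tfrac1t\int_0^t\bigl(F_{u+\tau}(x)-F_u(x)\bigr)du$ converges uniformly on $\DD^*$ precisely by $T$-continuity at $u=0$ and $u=\tau$; and the a priori bound on $q(t)$ (using $\|Dg_\tau(x)^{-1}\|\le 2$ and the uniform bound $\tfrac{2}{\tau}\sup_{z\in\DD}\|z\|$ on the left-hand side) closes the bootstrap. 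The resulting formula $f=Dg_\tau^{-1}\cdot\tfrac1\tau(F_\tau-\Id)$ makes holomorphy and boundedness of $f$ on strictly-inside subsets immediate, and this is indeed a clean way to circumvent the absence of Montel's theorem.

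The one genuine flaw is in the ($\Leftarrow$) direction. You need $F_{t_0}(\DD^*)$ and $\bigcup_{0\le h\le t_0/2}F_{t_0-h}(\DD^*)$ to lie strictly inside $\DD$, and you justify this by Schwarz--Pick contraction of the Kobayashi pseudometric. For a general bounded domain this fails: nonexpansiveness keeps images inside Kobayashi balls, but Kobayashi balls of a bounded \emph{non-convex} domain need not lie strictly inside (the punctured bidisc is a standard counterexample), and this is exactly why Lemma~\ref{lem-inside} of the paper assumes convexity. The repair is to compose in the other order: write $F_{t_0+h}(x)-F_{t_0}(x)=F_{t_0}(F_h(x))-F_{t_0}(x)$ and $F_{t_0}(x)-F_{t_0-h}(x)=F_{t_0-h}(F_h(x))-F_{t_0-h}(x)$, and use that the whole family $\{F_s\}_{s\ge0}$ is uniformly Lipschitz on any strictly-inside subset (Cauchy inequality plus boundedness of $\DD$) together with $\sup_{x\in\DD^*}\|F_h(x)-x\|\to0$. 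With that modification your easy direction goes through for every bounded domain, and the proposal as a whole becomes a complete proof.
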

The mapping $f$ defined by \eqref{gener} is called
the {\sl (infinitesimal) generator} of the
semigroup~$\mathcal{F}.$ In this case the semigroup $\mathcal{F}$ can be reproduced as the unique solution of the Cauchy problem
\begin{equation}  \label{nS1}
\left\{
\begin{array}{l}
\displaystyle
\frac{\partial u(t,x)}{\partial t}=f(u(t,x)) \vspace{2mm} \\
u(0,x)=x,%
\end{array}%
\right.
\end{equation}%
where we set $u(t,x)=F_{t}(x)$ (see, for example, \cite{E-R-S-19}).

\begin{defin}
Let $\Ff=\{F_t(x)\}_{t\geq 0}$ be a semigroup on a domain $\DD$.
We say that $\Ff$ acts strictly inside $\DD$ if for every subset
$\DD^*$ strictly inside $\DD$ and every $t_0>0$ the set
$\{F_t(x):\ x\in\DD^*,\ t\in[0,t_0] \}$ lies strictly inside
$\DD$.
\end{defin}

In the case where $X$ is finite-dimensional, each semigroup acts strictly inside by compactness, while in infinite-dimensional settings this property should be verified. The next assertion presents a simple sufficient condition for a semigroup to act strictly inside its domain.

\begin{lemma}\label{lem-inside}
Let $\DD$ be a bounded convex domain in a complex Banach space. Then each semigroup
$\mathcal{F}=\{F_t\}_{t\ge0}\subset\Hol(\DD)$ acts strictly inside
$\DD$.
\end{lemma}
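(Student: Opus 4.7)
The plan is to argue with the Kobayashi pseudodistance $k_\DD$ on $\DD$. Because $\DD$ is a bounded convex domain, $k_\DD$ is a genuine distance; every $F\in\Hol(\DD)$ is a $k_\DD$-contraction by the Schwarz--Pick property; and, crucially, a set $A\subset\DD$ lies strictly inside $\DD$ if and only if it has finite $k_\DD$-diameter. One direction of this equivalence is obtained by connecting $x,y\in\DD$ with the holomorphic disc $\zeta\mapsto x+\zeta(y-x)/r$ for any $r\in(\|y-x\|/\eta,1)$, where $\eta:=\mathrm{dist}(\{x,y\},\partial\DD)$, and chaining such local bounds through a norm-bounded strictly-inside set; the other uses that Kobayashi balls around any interior point are themselves strictly inside $\DD$ for bounded convex domains.

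With this in hand, the strategy is to reduce the lemma to a one-point orbit bound. Fix a basepoint $x_0\in\DD^*$ and set $C_1:=\sup_{x\in\DD^*}k_\DD(x_0,x)<\infty$. For any $x\in\DD^*$ and $t\in[0,t_0]$, the non-expansivity of $F_t$ together with the triangle inequality yields
\[
k_\DD(F_t(x),x_0)\le k_\DD(F_t(x),F_t(x_0))+k_\DD(F_t(x_0),x_0)\le C_1+k_\DD(F_t(x_0),x_0),
\]
so it is enough to show $C_2:=\sup_{t\in[0,t_0]}k_\DD(F_t(x_0),x_0)<\infty$: the set in the lemma will then lie inside the $k_\DD$-ball of radius $C_1+C_2$ around $x_0$, which is strictly inside $\DD$ by the equivalence above.

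To obtain the orbit bound, pointwise continuity at $t=0^+$ from Definition~\ref{def-sg-hol}(ii) together with the norm-to-$k_\DD$ comparison near $x_0$ produce a $\delta>0$ with $k_\DD(F_s(x_0),x_0)\le 1$ for every $s\in[0,\delta]$. The semigroup law combined with Schwarz--Pick then gives
\[
k_\DD(F_{t+s}(x_0),F_t(x_0))=k_\DD\bigl(F_t(F_s(x_0)),F_t(x_0)\bigr)\le k_\DD(F_s(x_0),x_0)\le 1
\]
whenever $s\in[0,\delta]$, so the single orbit $\{F_t(x_0):t\in[0,t_0]\}$ has $\delta$-increments uniformly bounded by $1$ in $k_\DD$. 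Telescoping such increments along $[0,t_0]$ yields $k_\DD(F_t(x_0),x_0)\le \lceil t_0/\delta\rceil+1$ for every $t\in[0,t_0]$, which is the desired finiteness of $C_2$.

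The main obstacle will be justifying the equivalence ``strictly inside $\Longleftrightarrow$ $k_\DD$-bounded'' in the infinite-dimensional setting: in finite dimensions it is immediate by compactness, whereas in a Banach space it is precisely the convexity of $\DD$ that powers the disc-comparison argument above and its counterpart showing that $k_\DD$ blows up at $\partial\DD$. Everything else is a routine pairing of Schwarz--Pick non-expansivity with the semigroup law used to chain $\delta$-sized increments of a single orbit.
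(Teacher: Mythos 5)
Your proposal is correct and follows essentially the same route as the paper: equip the bounded convex domain with its hyperbolic (Kobayashi) metric, use Schwarz--Pick non-expansivity of each $F_t$ and the triangle inequality to reduce the claim to bounding the single orbit $\{F_t(x_0):t\in[0,t_0]\}$, and invoke the equivalence between lying strictly inside and having finite hyperbolic diameter. In fact you are somewhat more careful than the paper, which simply asserts that $\rho(x_0,F_t(x_0))$ is ``finite when $t\le t_0$'': your $\delta$-increment telescoping via the semigroup law supplies the uniform bound $\sup_{t\in[0,t_0]}k_\DD(F_t(x_0),x_0)<\infty$ that the paper's one-line justification leaves implicit.
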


\begin{proof}
 Since $\DD$ is a bounded convex domain, there exists a hyperbolic metric $\rho$ on $\DD$. For each $t\ge0$, the semigroup element $F_t$ is $\rho$-nonexpansive in the sense that $\rho(F_t(x),F_t(y))\le\rho(x,y)$ for all $x,y\in\DD$.
	
Take now any subset $\DD^*$ which lies strictly inside $\DD$. Clearly $\DD^*$  is
	contained in a $\rho$-ball centered at $x_0$; denote the radius of this
	ball by $r$. Then for every $x\in\DD^*$ and $t\in[0,t_0]$,
	\begin{eqnarray*}
		\rho (x_0, F_t(x)) &\le& \rho(x_0,F_t(x_0))  +  \rho(F_t(x_0), F_t(x)) \\
		&\le& \rho(x_0,F_t(x_0))  +  \rho(x_0,x)\le   \rho(x_0,F_t(x_0))  +r.
	\end{eqnarray*}
	The first summand in the last sum is finite when $t\le t_0$. Hence
	the set $\{F_t(x):\ x\in\DD^*,\  t\in[0,t_0] \}$ belongs to a $\rho$-ball $\mathcal B$ centered at $x_0$ and having a finite radius, so it lies strictly inside $\DD$.
\end{proof}
Note that the conclusion of Lemma~\ref{lem-inside} is valid under weaker conditions too. Indeed, suppose that $\rho$ is a hyperbolic metric on $\DD$, and a family $\{F_t(x)\}_{t\geq 0}$ is continuous with respect to $t$ and consists of $K$-Lipschitz self-mappings relative to $\rho$ on a domain $\DD$. Then following the same considerations we see that  $\Ff$ acts strictly inside $\DD$.

A point $x_0 \in \DD$ is called a fixed point of a semigroup $\mathcal{F} =\left\{F_t\right\}_{t\ge0}$ if $F_t(x_0)=x_0$ for all $t \geq 0$. It follows from the uniqueness of the solution to the Cauchy problem~\eqref{nS1} that the common fixed point set of
$\mathcal{F}$ coincides with the null point set of its generator~$f$. A fixed point $x_0$ is said to be (globally) {\it{attractive}} if  $F_t(x)\to x_0$ as $t\to\infty$ for all $x\in \DD$. While in the one-dimensional case, for a semigroup of holomorphic self-mappings, which does not consist of automorphisms, the existence of a unique fixed point implies its attractivity, this is no longer true in higher-dimensional spaces, as shown by the simple example: $F_t(x_1,x_2)=(e^{-t}x_1,e^{it}x_2)$. Moreover, in an infinite-dimensional space a semigroup may converge to a unique interior point  $x_0\in\DD$ but not uniformly on any neighborhood of $x_0$ (see, for instance, Example~\ref{examp123} below).

The most familiar class of semigroups consists of semigroups of bounded linear or affine operators. In the general situation, the `linearization problem' is the question whether a given semigroup is biholomorphically equivalent to a semigroup of linear/affine mappings. If $\Ff$ has an interior fixed point $x_0\in\DD$, this problem is equivalent to searching for a biholomorphic mapping $h\in\Hol(\DD,X),\ h(x_0)=0,$ and a linear operator $A$ on $X$ such that
    \begin{equation}\label{konigs}
    F_t(x)=h^{-1}\left(e^{tA}  h(x) \right).
    \end{equation}
    Obviously, if $\Ff$ is generated by a mapping $f\in\Hol(\DD,X)$, then $A=f'(x_0)$.

    While in the one-dimensional case any semigroup is linearizable (see \cite{E-S-book} for a survey of this problem), in the multi-dimensional settings there are  non-linearizable semigroups. A criterion for linearizability of a semigroup acting on a domain in the finite-dimensional space $\C^n$ is given in \cite{B-E-S}. Moreover, it is shown there that if the operator $A=f'(x_0)$ has no resonances, then the semigroup generated by $f$ is linearizable. Regarding the general infinite-dimensional case, a sufficient condition for linearizability involving first terms of Taylor's series of the semigroup generator is given in \cite[Proposition~3.7.5]{E-R-S-04}.

\vspace{3mm}

%\subsection{Semicocycles}
The main object of study in this paper is that of
{\it{semicocycle}}, which plays
an important role in the theory of dynamical systems (for the
holomorphic one-dimensional case see~\cite{Konig}). Throughout the paper we assume that $\A$ is a complex unital Banach algebra with
the unity $1_A$ such that $\|1_\A\|_\A=1$.

\begin{defin}\label{semicocycle}
Let $\mathcal{F}=\{F_t\}_{t\ge0}\subset\Hol(\DD)$ be a semigroup. The family $\left\{\Gamma_t\right\}_{t\ge0}\subset\Hol(\DD,\A)$ is called a (holomorphic) semicocycle over $\mathcal{F}$ if it satisfies the following:

\begin{itemize}
  \item [(a)] the chain rule:
$\Gamma_t(F_s(x))\Gamma_s(x) = \Gamma_{t+s}(x)$ for all $t,s\ge0$
and $x\in\DD$;

\item [(b)] $\displaystyle \lim_{t\to 0^+}\Gamma_t(x)= 1_\A$ for
every $x \in \DD$.
\end{itemize}
\end{defin}
The simplest example of a semicocycle over $\mathcal{F}$ is given by the Fr\'ech{e}t derivatives of the semigroup: $\Gamma_t(x)={F_t}'(x)$. It can be easily seen that if the semigroup $\Ff$ is generated by a mapping $f$, this semicocycle satisfies the differential equation $\displaystyle \frac{d v(t,x)}{d t} =f'(F_t(x))v(t,x)$. It turns out that this equation is a very special case of a wide class of nonautonomous dynamical systems  solutions of which  are semicocycles. More precisely,
\begin{theorem}[Theorem~4.2 in \cite{EJK-multi}]\label{th-sol-cauchy}
	Let $B\in \Hol(\mathcal{D},\A)$, and let $\mathcal{F}= \{F_t\}_{t\ge0}\subset \Hol(\mathcal{D})$ be a semigroup. The evolution problem
	\begin{equation}\label{cauchy}
	\left\{
	\begin{array}{l}
	\displaystyle \frac{d v(t,x)}{d t} =B(F_t(x))v(t,x) \vspace{2mm} \\
	v(0,x)=1_\A.
	\end{array}%
	\right.
	\end{equation}
	%~\eqref{cauchy}
	has the unique solution $u(t,x),\ t\ge0,\ x\in \mathcal{D}$, and the family $\{\Gamma_t(x):=u(t,x)\}_{t\ge0}$ is a holomorphic semicocycle over $\Ff$.
\end{theorem}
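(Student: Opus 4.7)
The plan is to construct the solution via Picard iteration pointwise in $x$, and then deduce the semicocycle axioms from the uniqueness part of the linear ODE theory in the Banach algebra $\A$.

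First, fix $x \in \DD$. By the semigroup axioms, the curve $t \mapsto F_t(x)$ is continuous (right-continuity at $0$ gives right-continuity at every $t_0$ via $F_{t_0+h}=F_h\circ F_{t_0}$, and the same combined with the algebraic law yields continuity from the left as well), and composition with the continuous map $B$ makes $t\mapsto B(F_t(x))$ continuous. Equation~\eqref{cauchy} is then a linear Cauchy problem in the Banach algebra $\A$ with continuous operator-valued coefficient; standard Banach-space ODE theory provides a unique global solution $u(t,x)$, expressed by the convergent Dyson series
\begin{equation*}
u(t,x) = 1_\A + \sum_{n=1}^{\infty}\int_{0\le t_n\le\cdots\le t_1\le t} B(F_{t_1}(x))B(F_{t_2}(x))\cdots B(F_{t_n}(x))\,dt_n\cdots dt_1.
\end{equation*}

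Second, I would verify that $\Gamma_t(x):=u(t,x)$ is holomorphic in $x$. The Picard iterates, defined by $u_0\equiv 1_\A$ and $u_{k+1}(t,x)=1_\A+\int_0^t B(F_s(x))u_k(s,x)\,ds$, are holomorphic in $x$ for every $t$, since each $F_s$ and $B$ are, and integration of a locally bounded continuous family of holomorphic maps preserves holomorphicity. On an open ball $U\subset\DD$ on which $\|B\|_\A$ is bounded by some constant $M$ and on which the orbit $\{F_s(x):s\in[0,t],\ x\in U\}$ stays in a set where $\|B\|_\A\le M$, the standard estimate $\|u_{k+1}-u_k\|\le M^{k+1}t^{k+1}/(k+1)!$ gives uniform convergence on $U$, so the limit is holomorphic.

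Third, and most importantly, the chain rule. Fix $s\ge 0$ and $x\in\DD$ and define two curves in $\A$,
\begin{equation*}
w_1(t)=\Gamma_{t+s}(x),\qquad w_2(t)=\Gamma_t(F_s(x))\,\Gamma_s(x).
\end{equation*}
Both satisfy $w_1(0)=w_2(0)=\Gamma_s(x)$. Differentiating and using the semigroup law $F_t\circ F_s=F_{t+s}$,
\begin{equation*}
w_1'(t)=B(F_{t+s}(x))\,w_1(t),\qquad w_2'(t)=B(F_t(F_s(x)))\,\Gamma_t(F_s(x))\,\Gamma_s(x)=B(F_{t+s}(x))\,w_2(t).
\end{equation*}
Applying the uniqueness part of the ODE theory (now for the Cauchy problem with initial datum $\Gamma_s(x)$ at $t=0$) gives $w_1\equiv w_2$, which is property (a). Property (b) is immediate from continuity of $u(\cdot,x)$ at $t=0$ with $u(0,x)=1_\A$.

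The main technical obstacle lies in Step two: in an infinite-dimensional domain one cannot appeal to compactness, so one must carefully locate a neighborhood on which $\|B(F_s(\cdot))\|_\A$ is bounded uniformly in $s\in[0,t]$ — this is where the interplay between holomorphicity of $B$, local boundedness of holomorphic maps on balls, and continuity of the orbits has to be exploited. Once this uniform control is secured, the rest of the proof is a clean consequence of linear ODE uniqueness.
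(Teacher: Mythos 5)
First, a point of reference: this theorem is quoted verbatim from \cite[Theorem~4.2]{EJK-multi}; the present paper contains no proof of it, so your argument can only be measured against the expected standard one. Your overall strategy --- solve the linear evolution problem in $\A$ for each fixed $x$ by Picard/Dyson iteration, and obtain the chain rule by applying uniqueness of solutions to the two curves $w_1(t)=\Gamma_{t+s}(x)$ and $w_2(t)=\Gamma_t(F_s(x))\Gamma_s(x)$ --- is the right one; the cocycle identity and property (b) are handled correctly, and the Dyson series is written with the correct time-ordering for the equation $v'=B(F_t(x))v$.

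Two steps, however, are asserted rather than proved. (i) Left-continuity of $t\mapsto F_t(x)$ does not follow from ``the algebraic law'' alone: Definition~\ref{def-sg-hol} gives only pointwise convergence $F_h(y)\to y$, and to control $F_{t_0-h}(F_h(x))-F_{t_0-h}(x)$ you need the family $\{F_s\}_{s\ge0}$ to be equi-Lipschitz near $x$. This is true because $\DD$ is bounded (Cauchy's inequality bounds $\|F_s'(y)\|$ uniformly in $s$ in terms of $\mathrm{diam}(\DD)$ and $\mathrm{dist}(y,\partial\DD)$), but that ingredient must be invoked explicitly; without it the continuity of the orbit, and hence the very existence of the Picard iterates, is not justified. (ii) More seriously, your Step two --- holomorphy of $x\mapsto u(t,x)$ --- rests on a bound for $\|B(F_s(x))\|_\A$ uniform over $(s,x)\in[0,t]\times U$, which you name as ``the main technical obstacle'' and then defer. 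In infinite dimensions this is exactly the nontrivial point: the tube $\{F_s(x):\ s\in[0,t],\ x\in U\}$ is not compact, and (as Section~\ref{Pre} recalls) a holomorphic $B$ need not be bounded on sets strictly inside $\DD$. The gap can be closed as follows: by (i) the single orbit $K=\{F_s(x_0):\ s\in[0,t]\}$ is compact, so local boundedness of the continuous map $B$ together with a Lebesgue-number argument yields $r>0$ and $M$ with $\|B\|_\A\le M$ on the $r$-neighborhood of $K$; the same equi-Lipschitz property from (i) then gives $\delta>0$ such that $\|F_s(x)-F_s(x_0)\|_X<r$ for all $s\in[0,t]$ whenever $\|x-x_0\|_X<\delta$, which places the whole tube over $U=\{x:\|x-x_0\|_X<\delta\}$ inside the set where $\|B\|_\A\le M$. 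With that lemma in hand, your Picard estimate does give uniform convergence on $U$ and hence holomorphy, and the rest of your proof goes through.
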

We call the mapping $B\in \Hol(\mathcal{D},\A)$ the generator of the semicocycle. A~natural question is whether semicocycles in general are solutions of dynamical systems. This question is answered by
\begin{theorem}[Theorems~5.1--5.2 in \cite{EJK-multi}]\label{th-differentiability}
	Let $\DD$ be a bounded domain.
	Let $\{\Gamma_t\}_{t \geq 0}\subset\Hol(\DD,\A)$ be a semicocycle over a $T$-continuous semigroup~$\Ff=\left\{F_t\right\}_{t\ge0}\subset\Hol(\DD)$.
	Then, for each $x\in \DD$, the function $t\mapsto \Gamma_t(x)$  is differentiable on $[0,\infty)$  if and only if $\{\Gamma_t\}_{t \geq 0}$ is UJC. In this case, defining
	\begin{equation}\label{B}
	B(x)=\left.\frac{d}{dt}\Gamma_t(x)\right|_{t=0},
	\end{equation}
	we have ${B\in\Hol(\DD,\A)}$ and $\Gamma_t(x)$ is the unique solution to
	the evolution problem \eqref{cauchy}.
	Moreover, $\left\{\Gamma_t,\right\}_{t\ge0}$ is $T$-continuous if and only if its generator $B$ is bounded on every domain $\mathcal{D}^*$ strictly inside $\DD$.
\end{theorem}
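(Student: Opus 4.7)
The plan is to handle the three assertions of the theorem sequentially, all resting on the chain-rule identity obtained by writing Definition~\ref{semicocycle}(a) as $\Gamma_{t+s}(x) = \Gamma_s(F_t(x))\,\Gamma_t(x)$, which yields
\begin{equation*}
\frac{\Gamma_{t+s}(x) - \Gamma_t(x)}{s} \;=\; \frac{\Gamma_s(F_t(x)) - 1_\A}{s}\;\Gamma_t(x).
\end{equation*}
This identity at once reduces differentiability of $t\mapsto \Gamma_t(x)$ at an arbitrary time to differentiability at $t=0$ evaluated at the shifted point $F_t(x)$, and in the differentiable case it identifies the derivative as $B(F_t(x))\,\Gamma_t(x)$, which is precisely the ODE in~\eqref{cauchy}; uniqueness for \eqref{cauchy} is then the standard linear-ODE uniqueness in the Banach algebra~$\A$.

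For UJC $\Rightarrow$ differentiability, UJC at $(0,x_0)$ gives a neighborhood $U\ni x_0$ on which $\Gamma_s - 1_\A$ converges to zero uniformly. The goal is to show that the difference quotients $B_s(x) := (\Gamma_s(x) - 1_\A)/s$ form a Cauchy family in the sup-norm on some smaller neighborhood $U' \subset U$. For this I would iterate the chain rule,
\begin{equation*}
\Gamma_{ns}(x) = \Gamma_s(F_{(n-1)s}(x))\,\Gamma_s(F_{(n-2)s}(x))\cdots\Gamma_s(x),
\end{equation*}
expand each factor as $1_\A + s B_s(F_{ks}(x))$, and use the uniform smallness of $\Gamma_s - 1_\A$ on $U$ together with Cauchy's integral estimates for holomorphic mappings on Banach spaces to control the error terms by a Riemann-sum / Peano--Baker-type argument. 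The resulting limit $B$ is then a uniform limit of holomorphic maps on $U'$, hence $B\in\Hol(U',\A)$; since $x_0$ is arbitrary, $B \in \Hol(\DD,\A)$.

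For differentiability $\Rightarrow$ UJC, pointwise differentiability yields pointwise convergence $\Gamma_t(x)\to 1_\A$ as $t\to 0^+$, and the task is to upgrade this to local uniform convergence. The plan is a Vitali-type normality argument: first establish local boundedness of the family $\{\Gamma_t\}_{t\in[0,t_0]}$ near any point (in infinite dimensions this is the delicate step, since pointwise boundedness does not entail local boundedness, so the semicocycle structure together with holomorphy in $x$ must be exploited to produce a local bound), and then invoke the Vitali theorem for holomorphic mappings on Banach spaces to pass from pointwise to local uniform convergence. The general case $(t_0, x_0)$ reduces to $t_0=0$ via the central identity above, using $T$-continuity of $\Ff$ to move the base point $F_{t_0}(x_0)$ continuously.

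For the ``moreover'' clause, if $B$ is bounded on every strict-interior subset, enlarge $\DD^*$ slightly to a set $\DD^{**}$ so that the semigroup trajectory from $\DD^*$ remains in $\DD^{**}$ for $t\in[0,t_0]$; then $\|B(F_s(x))\|_\A \le M$ on $\DD^*\times[0,t_0]$, and Gronwall applied to \eqref{cauchy} gives $\|\Gamma_t(x) - 1_\A\|_\A \le Mt\,e^{Mt_0}$ uniformly on $\DD^*$, i.e.\ $T$-continuity. Conversely, $T$-continuity supplies uniform closeness of $\Gamma_s$ to $1_\A$ on strict-interior subsets, and the integral form $\Gamma_t(x) - 1_\A = \int_0^t B(F_s(x))\Gamma_s(x)\,ds$ then lets one bound $\|B(x)\|_\A$ on $\DD^*$ by the sup-norm of $(\Gamma_t - 1_\A)/t$ on a slight enlargement of $\DD^*$ for small $t>0$, yielding boundedness of $B$. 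The main obstacle throughout is the infinite-dimensional issue of extracting local bounds on a holomorphic family from pointwise data, which forces one to exploit the semicocycle structure rather than relying on generic holomorphic-normality results.
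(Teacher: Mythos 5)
First, a point of reference: this paper does not prove Theorem~\ref{th-differentiability} at all --- it is imported verbatim from Theorems~5.1--5.2 of \cite{EJK-multi} --- so there is no in-paper argument to measure yours against; your proposal has to stand on its own. It correctly isolates the central identity $\Gamma_{t+s}(x)=\Gamma_s(F_t(x))\Gamma_t(x)$ and the reduction of differentiability at arbitrary $t$ to differentiability at $t=0$, and the Gronwall half of the ``moreover'' clause is fine. But the two hard implications are left as unexecuted sketches, and in the first of them the plan as stated would not work. For UJC $\Rightarrow$ differentiability, you propose to expand $\Gamma_{ns}(x)$ as a product of factors $1_\A+sB_s(F_{ks}(x))$ and control errors by a Peano--Baker argument; this presupposes some a priori bound on $B_s=(\Gamma_s-1_\A)/s$, whereas UJC only gives $\sup_U\|\Gamma_s-1_\A\|_\A\to0$ with \emph{no rate}, i.e.\ $sB_s\to0$, which says nothing about $B_s$ itself. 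The whole content of this implication is the analogue of the classical fact that a norm-continuous one-parameter semigroup automatically has a bounded generator; the classical proof of that fact goes through the averaging trick (invertibility of $\frac1t\int_0^t\Gamma_s\,ds$ for small $t$), not a product expansion, and here even that trick is obstructed because $\Gamma_h(x)\Gamma_s(x)\ne\Gamma_{s+h}(x)$ --- the base point moves to $F_s(x)$. Your outline never confronts this, so the key step is missing rather than merely compressed.

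For differentiability $\Rightarrow$ UJC you explicitly name the crux --- obtaining local uniform boundedness of $\{\Gamma_t\}_{t\in[0,t_0]}$ from pointwise data so that a Vitali-type theorem applies --- and then defer it (``the semicocycle structure together with holomorphy in $x$ must be exploited''), without saying how; in infinite dimensions this typically requires a Baire-category argument on the sets where the difference quotients are uniformly bounded, propagated by the cocycle identity, and none of that is present. Finally, in the converse half of the ``moreover'' clause, bounding $\|B(x)\|_\A$ by $\sup_{\DD^*}\|(\Gamma_t-1_\A)/t\|_\A$ begs the question: $T$-continuity gives uniform smallness of $\Gamma_t-1_\A$ on $\DD^*$ but not a uniform $O(t)$ bound, so you again need the averaging/invertibility device (applied to $\frac1t\int_0^t\Gamma_s(x)\,ds$ together with the integral form of \eqref{cauchy}) to extract a uniform bound on $B$. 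In short: the skeleton is right, but each load-bearing step is asserted rather than proved, and the proposed mechanism for the forward implication does not supply the estimate it needs.
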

We recall a consequence of the above theorem, which will be of use later (cf. \cite[Theorems 4.4 and 5.3]{EJK-multi}).
\begin{theorem}\label{th-estim1}
	Let $\Ff=\{F_t\}_{t\ge0}\subset \Hol(\DD)$ be a semigroup  such that for some $x_0\in\DD$,  $F_t(x)\to x_0$ as $t\to\infty$, uniformly on subsets strictly inside~$\mathcal{D}$.
	Let $\left\{\Gamma_t\right\}_{t\ge0}\subset \Hol(\DD,\A)$  be an UJC semicocycle over $\Ff$. Then for every set $\DD^*$  strictly inside $\DD$ there exist real $C$ and
	$L$ such that $ \|\Gamma_t(x)\|_\A\le C e^{Lt}$ for all $ x\in\mathcal{D}^*.$
\end{theorem}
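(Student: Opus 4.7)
The plan is a two-step strategy: first obtain a uniform bound on $\|\Gamma_t(y)\|_\A$ for $t \in [0,1]$ and $y$ in a neighborhood of the attractor $x_0$; then propagate this bound to all of $\DD^*$ via the semicocycle chain rule, using that forward orbits from $\DD^*$ eventually enter every neighborhood of $x_0$.

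For the local bound, the essential point is that $x_0$ is fixed by $\Ff$. For each $t_0 \in [0,1]$, applying the UJC property at the point $(t_0, x_0)$ yields a neighborhood $U(t_0)\subset\DD$ of $x_0$ and a number $\delta(t_0)>0$ such that $\Gamma_t \to \Gamma_{t_0}$ uniformly on $U(t_0)$ as $t\to t_0$. Using continuity at $x_0$ of the holomorphic map $\Gamma_{t_0}$ and shrinking $U(t_0)$ if needed, one obtains $\|\Gamma_t(y)\|_\A \le \|\Gamma_{t_0}(x_0)\|_\A + 2$ for $|t-t_0|<\delta(t_0)$ and $y \in U(t_0)$. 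A finite subcover of $[0,1]$ by such time intervals, together with the intersection $V := \bigcap_j U(t_j)$ of the corresponding spatial neighborhoods, produces a neighborhood $V$ of $x_0$ and a constant $M_1$ with $\|\Gamma_t(y)\|_\A \le M_1$ for all $y \in V$ and $t \in [0,1]$.

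For the propagation step, the uniform convergence hypothesis furnishes some $T \ge 0$ such that $F_t(x) \in V$ for every $x \in \DD^*$ and every $t \ge T$. For $t \ge T$, writing $t = T + n + \tau$ with $n \in \N\cup\{0\}$ and $\tau \in [0,1)$, iterated application of the chain rule $\Gamma_{s+r}(x) = \Gamma_r(F_s(x))\,\Gamma_s(x)$ gives
$$\Gamma_t(x) = \Gamma_\tau(F_{T+n}(x))\,\Gamma_1(F_{T+n-1}(x))\cdots\Gamma_1(F_T(x))\,\Gamma_T(x).$$
Since every $F_{T+k}(x)$ lies in $V$, each of the $n+1$ leading factors is bounded by $M_1$ in $\|\cdot\|_\A$, and absorbing $T$ and $\log M_1$ into constants this yields the desired bound of the form $\|\Gamma_t(x)\|_\A \le C e^{Lt}\,\sup_{x\in\DD^*}\|\Gamma_T(x)\|_\A$ for $t \ge T$, with $L := \max(0, \log M_1)$.

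The remaining, and most delicate, step is the finite-time bound $\sup_{x\in\DD^*,\, t\in[0,T]}\|\Gamma_t(x)\|_\A < \infty$, which is the main obstacle, since $\DD^*$ is not compact in an infinite-dimensional Banach space and holomorphic mappings need not be bounded on strictly-inside sets. The clean way around this is to invoke the evolution equation $\frac{d}{dt}\Gamma_t(x) = B(F_t(x))\Gamma_t(x)$ provided by Theorem~\ref{th-differentiability} and apply Gronwall, giving
$$\|\Gamma_t(x)\|_\A \le \exp\!\left(\int_0^t \|B(F_s(x))\|_\A\, ds\right).$$
The orbit set $\{F_s(x) : s \in [0,T],\, x \in \DD^*\}$ is strictly inside $\DD$ (the uniform attraction to $x_0$ controls large $s$, and the semigroup continuity at $s=0$ controls small $s$), and the $T$-continuity part of Theorem~\ref{th-differentiability}, which is exactly the statement that the generator $B$ is bounded on every subset strictly inside $\DD$, supplies the uniform bound on $\|B(F_s(x))\|_\A$ needed to conclude. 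Merging the finite-time and asymptotic estimates gives $\|\Gamma_t(x)\|_\A \le C e^{Lt}$ on all of $[0,\infty)\times\DD^*$, possibly after enlarging $C$ and $L$.
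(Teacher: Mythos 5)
Your first two steps are sound and are the natural route: the covering argument over $t\in[0,1]$ at the single spatial point $x_0$ correctly exploits UJC (plus continuity of each $\Gamma_{t_0}$ at $x_0$) to produce a neighborhood $V$ of $x_0$ and a constant $M_1$ bounding $\|\Gamma_t\|_\A$ on $[0,1]\times V$, and the chain-rule factorization then yields the exponential bound for $t\ge T$ once the forward orbit of $\DD^*$ has entered $V$. (The paper itself gives no proof of this statement --- it is quoted from \cite{EJK-multi} --- so the comparison here is with what the hypotheses actually support.)

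The gap is exactly at the point you flag as delicate, and your fix is circular. To bound $\sup_{x\in\DD^*,\,t\in[0,T]}\|\Gamma_t(x)\|_\A$ you invoke the Gronwall estimate $\exp\bigl(\int_0^t\|B(F_s(x))\|_\A\,ds\bigr)$ and then cite the ``moreover'' clause of Theorem~\ref{th-differentiability} to get $B$ bounded on sets strictly inside $\DD$. But that clause says $B$ is so bounded \emph{if and only if} the semicocycle is $T$-continuous, whereas the hypothesis here is only UJC, which in infinite dimensions is strictly weaker --- this is precisely why the paper distinguishes the two notions. So you are assuming the hard part. Indeed, Example~\ref{ex-2} of the paper shows the step cannot be closed from UJC alone: on the unit ball of $c_0$ with $F_t=e^{-t}\cdot$, the UJC semicocycle $\Gamma_t(x)=\exp\bigl[\sum_k(2x_k)^k(1-e^{-tk})\bigr]$ has generator $B(x)=\sum_k k(2x_k)^k$, unbounded on the ball of radius $\frac12$, and at the points $x^{(n)}=\bigl(\frac12,\dots,\frac12,0,\dots\bigr)$ one computes $\Gamma_1(x^{(n)})=\exp\bigl[\sum_{k=1}^n(1-e^{-k})\bigr]\to\infty$, so no finite uniform bound at time $t=1$ exists on that strictly-inside set. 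The statement as quoted therefore tacitly carries the stronger standing hypotheses of its source ($\DD$ bounded, the semigroup and semicocycle $T$-continuous, equivalently $B$ bounded on strictly-inside sets); under those your Gronwall argument works, provided you also justify that the orbit set $\{F_s(x):s\in[0,T],\,x\in\DD^*\}$ lies strictly inside $\DD$ via Lemma~\ref{lem-inside} or an ``acts strictly inside'' assumption --- pointwise continuity at $s=0$ does not give this on a non-compact $\DD^*$. Without such additional hypotheses, the finite-time bound, which is the only genuinely hard part of the theorem, is not established.
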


Denote the set of all invertible elements of $\A$ by $\A_*$. For any given semicocycle $\left\{\Gamma_t\right\}_{t\ge0}$  we can construct a class of other semicocycles as follows.
\begin{propo}\label{propo-1}
	Let $M\in\Hol(\DD,\A_*)$. Then the family 	$\left\{\widetilde{\Gamma}_t\right\}_{t\ge0}$ defined~by
	\begin{equation}\label{rep1}
	\widetilde{\Gamma}_t(x)=M(F_t(x))^{-1}\Gamma_t(x) M(x)
	\end{equation}
	is a semicocycle over $\mathcal{F}$.
\end{propo}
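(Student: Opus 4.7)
The proposal is to verify directly the three requirements in Definition 2.5 for $\widetilde{\Gamma}_t$: holomorphy, the chain rule, and the initial condition.

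First I would check that $\widetilde{\Gamma}_t\in\Hol(\DD,\A)$. Since $F_t\in\Hol(\DD)$, $M\in\Hol(\DD,\A_*)$, $\Gamma_t\in\Hol(\DD,\A)$, and inversion is holomorphic on the open set $\A_*\subset\A$, the mapping $x\mapsto M(F_t(x))^{-1}\Gamma_t(x)M(x)$ is a composition/product of holomorphic mappings valued in $\A$, hence holomorphic.

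Next I would verify the chain rule. This is a direct computation in which the middle factor and its inverse telescope: using $F_t\circ F_s = F_{t+s}$ and the chain rule for $\Gamma$,
\begin{eqnarray*}
\widetilde{\Gamma}_t(F_s(x))\widetilde{\Gamma}_s(x) &=& M(F_t(F_s(x)))^{-1}\Gamma_t(F_s(x)) M(F_s(x))\cdot M(F_s(x))^{-1}\Gamma_s(x) M(x)\\
&=& M(F_{t+s}(x))^{-1}\bigl[\Gamma_t(F_s(x))\Gamma_s(x)\bigr]M(x)\\
&=& M(F_{t+s}(x))^{-1}\Gamma_{t+s}(x)M(x) \;=\; \widetilde{\Gamma}_{t+s}(x).
\end{eqnarray*}

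Finally, for the initial condition I would use continuity. By Definition 2.2(ii), $F_t(x)\to x$ as $t\to 0^+$, so $M(F_t(x))\to M(x)$ by continuity of $M$, and since inversion is continuous on $\A_*$ we get $M(F_t(x))^{-1}\to M(x)^{-1}$. Combined with $\Gamma_t(x)\to 1_\A$, this gives $\widetilde{\Gamma}_t(x)\to M(x)^{-1}\cdot 1_\A\cdot M(x)=1_\A$.

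There is essentially no obstacle here — the statement is a formal verification, and the only substantive point to bear in mind is the noncommutativity of $\A$, which forces one to respect the ordering $M(F_t(x))^{-1}\Gamma_t(x)M(x)$ carefully so that the telescoping in the chain rule works out exactly as written.
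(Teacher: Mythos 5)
Your verification is correct and is exactly the intended argument: the paper states this proposition without proof (as a straightforward calculation), and your telescoping of $M(F_s(x))\cdot M(F_s(x))^{-1}$ together with the semigroup property and the chain rule for $\Gamma$ is the computation it has in mind. The holomorphy and initial-condition checks are also handled properly, including the point that only pointwise convergence $F_t(x)\to x$ and continuity of $M$ and of inversion on $\A_*$ are needed.
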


Two semicocycles $\{\Gamma_t\}_{t\ge0}$ and $\{\widetilde{\Gamma}_t\}_{t\ge0}$ related as in \eqref{rep1} are said to be {\it cohomologous}. It is easy to see that this is an equivalence relation.  Note that a particular case where $\widetilde{\Gamma}_t(x)=1_{\A}$, that is, $\Gamma_t(x)= M(F_t(x))M(x)^{-1}$, is sometimes referred to as a {\it coboundary} (see, for example, \cite{Latus, Katok}).

\section{Linearization problem}\label{sect-lineari}
\setcounter{equation}{0}

The simplest examples of semicocycles are those independent of~$x$,
 \begin{equation}\label{1111}
 \{\Gamma_t(x)=e^{tB_0}\}_{t\ge0}  \quad \mbox{for some}\quad B_0\in\A,
 \end{equation}
so it is natural to ask which semicocycles are cohomolgous to \eqref{1111}, that is can be represented in the form
\begin{equation}\label{rep}
	\Gamma_t(x)=M(F_t(x))^{-1}e^{tB_0}M(x).
	\end{equation}

In view of the analogy between  the concepts of cohomology of semicocycles and conjugacy of semigroups (compare formulas  \eqref{rep}  and \eqref{konigs}), representing a given semicocycle $\{\Gamma_t\}_{t\ge0}$ by~\eqref{rep} will be called {\it linearization} of  $\{\Gamma_t\}_{t\ge0}$ and the mapping $M$ will be called a {\it linearizing mapping} for $\{\Gamma_t\}_{t\ge0}$. Thus, the linearization problem for semicocycles, which is the focus of this paper, is
\begin{itemize}
  \item Under what conditions is a given semicocycle linearizable, that is, can be represented in the form \eqref{rep}?
\end{itemize}
Note that a semicocycle represented by \eqref{rep} is automatically differentiable with respect to $t$, hence uniformly jointly continuous by Theorem~\ref{th-differentiability}.

 We remark that representation  \eqref{rep}, if it exists, is not unique. Since $M$ takes invertible values, we can assume without loss of generality (cf. \cite{EJK}) that the mapping $M$
in \eqref{rep} satisfies the normalization
\begin{equation}\label{norm}
M(x_0)=1_\A
\end{equation}
for an arbitrary chosen $x_0\in\DD$. Denote, as above,  $B(x)=\left.\frac{d}{dt}\Gamma_t(x)\right|_{t=0}$. Assuming (as we will henceforth do) that  $x_0$ is a fixed point of $\Ff$, we get that $B_0$ in~\eqref{rep} is uniquely determined by
\begin{equation}\label{B0}
B_0=B(x_0).
\end{equation}
However, even conditions \eqref{norm}--\eqref{B0} do not guarantee the uniqueness of the linearizing mapping $M$ as the following example shows.

\begin{example}\label{examp-uniq}
	Let $X=\C,\ \DD$ be the open unit disk, $\Ff=\{e^{-t}\cdot\}_{t\ge0}$
	be the linear semigroup. Denote
	\[
	B_0=\left( \begin{array}{cc}
	1 & 0 \\
	0 & 2
	\end{array} \right),\quad
	M(x)=\left( \begin{array}{cc}
	1 & x \\
	0 & 1
	\end{array} \right).
	\]
	By Proposition~\ref{propo-1}, the family
	$\left\{\Gamma_t\right\}_{t\ge0}$ defined by
	$\Gamma_t(x)=M(F_t(x))^{-1}e^{tB_0}M(x)$ forms a semicocycle over
	$\Ff$. Direct calculation shows that actually
	$\Gamma_t(x)=e^{tB_0}$, so that it is linearizable by the constant mapping $1_\A\not=M(x)$.
\end{example}
On the other hand, if either the algebra $\A$ is commutative or
$B_0={\lambda\cdot 1_\A},\ \lambda\in\C,$ then it can be easily seen that the representation
\eqref{rep} is unique. Below in Theorem~\ref{th1} we present a more general condition for uniqueness.

\vspace{2mm}

It turns out that the $T$-continuity of the semicocycle represented by formula~\eqref{rep} can be characterized in terms of the mapping~$M$. To this end we assume in what follows that a semigroup $\Ff\subset\Hol(\DD)$ on a bounded domain~$\DD$ is $T$-continuous, hence is differentiable by Theorem~\ref{th-RS}.

\begin{theorem}\label{propo-gener1}
	Let  $\DD$ be a bounded domain, $M\in\Hol(\DD,\A_*),\ B_0\in\A$ and the family $\left\{\Gamma_t\right\}_{t\ge0}$ be defined by \eqref{rep}.
	
	(i) If both $M$ and $M^{-1}$ are bounded on each subset strictly inside $\DD$, then $\left\{\Gamma_t\right\}_{t\ge0}$ is a $T$-continuous semicocycle over $\mathcal{F}$.
	
	(ii) Assume in addition that $\DD$ is equipped with the hyperbolic metric $\rho$ and $\Ff$ converges to $x_0\in \DD$ as $t\to\infty$ uniformly on sets strictly inside $\DD$. If $\left\{\Gamma_t\right\}_{t\ge0}$ is a $T$-continuous semicocycle over $\mathcal{F}$, then both $M$ and $M^{-1}$ are bounded on each subset strictly inside $\DD$.
\end{theorem}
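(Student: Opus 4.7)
The plan is to handle the two parts separately, each built on direct manipulation of the defining identity~\eqref{rep}.

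\textbf{Part (i).} First, $\{\Gamma_t\}_{t\ge0}$ is a semicocycle by Proposition~\ref{propo-1} applied to the trivial constant semicocycle $\{e^{tB_0}\}_{t\ge0}$. For the $T$-continuity, fix $\DD^*$ strictly inside $\DD$ and $t_0\ge 0$, and use the identity $a^{-1}-b^{-1}=a^{-1}(b-a)b^{-1}$ to decompose
\begin{eqnarray*}
\Gamma_t(x)-\Gamma_{t_0}(x) &=& M(F_t(x))^{-1}\bigl[M(F_{t_0}(x))-M(F_t(x))\bigr]M(F_{t_0}(x))^{-1}e^{tB_0}M(x)\\
&& {}+\, M(F_{t_0}(x))^{-1}\bigl[e^{tB_0}-e^{t_0B_0}\bigr]M(x).
\end{eqnarray*}
By $T$-continuity of $\Ff$ (together with its standard property of acting strictly inside $\DD$ for short times), one finds a neighborhood $I$ of $t_0$ with $\bigcup_{t\in I}F_t(\DD^*)$ contained in some fixed set strictly inside $\DD$, so the hypothesis forces the prefactors $M(x),\,M(F_t(x))^{-1},\,M(F_{t_0}(x))^{-1}$ to be uniformly bounded on $\DD^*$. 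Cauchy estimates on the Fr\'echet derivative of $M$ (using boundedness of $M$ on a slightly larger strictly-inside set) deliver uniform continuity of $M$ on strictly-inside sets, so $\|M(F_{t_0}(x))-M(F_t(x))\|_\A\to 0$ uniformly on $\DD^*$ from the uniform convergence $F_t\to F_{t_0}$. Together with the norm-continuity $\|e^{tB_0}-e^{t_0B_0}\|_\A\to 0$, this yields the desired uniform convergence of $\Gamma_t$ to $\Gamma_{t_0}$.

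\textbf{Part (ii).} Holomorphy, hence continuity, of $M,M^{-1}$ at $x_0$ gives a $\rho$-ball $V$ about $x_0$ on which $\|M\|_\A,\|M^{-1}\|_\A\le R$. By the assumed uniform convergence $F_t\to x_0$, there is $T>0$ with $F_T(\DD^*)\subset V$. Solving \eqref{rep} at $t=T$ and inverting:
\begin{eqnarray*}
M(x)&=&e^{-TB_0}\,M(F_T(x))\,\Gamma_T(x),\\
M(x)^{-1}&=&\Gamma_T(x)^{-1}\,M(F_T(x))^{-1}\,e^{TB_0}.
\end{eqnarray*}
It therefore suffices to bound $\Gamma_T(x)$ and $\Gamma_T(x)^{-1}$ uniformly on $\DD^*$. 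Each $F_s$ is $\rho$-nonexpansive and $F_s(x_0)=x_0$, so
\[
\rho(F_s(x),x_0)\le \rho(x,x_0)\le \sup_{y\in\DD^*}\rho(y,x_0)=:r<\infty
\]
for every $s\ge 0,\ x\in \DD^*$; hence the orbit lies in the $\rho$-ball $B_\rho(x_0,r)$, which is strictly inside $\DD$. Since $\{\Gamma_t\}$ is $T$-continuous, Theorem~\ref{th-differentiability} provides a generator $B\in\Hol(\DD,\A)$ bounded by some $C_B$ on $B_\rho(x_0,r)$. Gronwall applied to
\[
\frac{d}{ds}\Gamma_s(x)=B(F_s(x))\Gamma_s(x),\qquad \frac{d}{ds}\Gamma_s(x)^{-1}=-\Gamma_s(x)^{-1}B(F_s(x)),
\]
both with initial value $1_\A$, yields $\|\Gamma_s(x)\|_\A,\,\|\Gamma_s(x)^{-1}\|_\A\le e^{C_B s}$ uniformly on $\DD^*$ for $s\in[0,T]$, giving the required uniform bounds on $M$ and $M^{-1}$ over $\DD^*$.

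\textbf{Main obstacle.} The substantive work lies in (ii): identifying a single strictly-inside subset of $\DD$ that simultaneously contains the whole forward orbit $\{F_s(x):x\in\DD^*,\ s\in[0,T]\}$ and on which the generator of $\{\Gamma_t\}$ is bounded. The hyperbolic-metric hypothesis supplies this set via $\rho$-nonexpansiveness of each $F_t$, after which Gronwall on the paired equations for $\Gamma_s$ and $\Gamma_s^{-1}$, combined with the inverted identity \eqref{rep}, gives the uniform bounds. Part (i) is essentially a continuity argument; its only non-cosmetic ingredient is the Cauchy-type uniform continuity of $M$ on strictly-inside subsets, which is routine once boundedness on a slightly larger strictly-inside set is assumed.
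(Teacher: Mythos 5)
Part (i) of your argument has a genuine gap. Your decomposition requires the orbit segment $\{F_t(x):x\in\DD^*,\ t\in I\}$, for $I$ a neighborhood of an \emph{arbitrary} $t_0$, to lie in a fixed subset strictly inside $\DD$ (otherwise the hypothesis on $M^{-1}$ gives you no uniform bound on $M(F_t(x))^{-1}$, and your Cauchy-estimate Lipschitz bound for $M$ does not apply along the orbit). This reduces to knowing that $F_{t_0}(\DD^*)$ is strictly inside $\DD$, which is precisely the ``acts strictly inside'' property that the paper explicitly warns is \emph{not} automatic in infinite dimensions; Lemma~\ref{lem-inside} establishes it only for convex domains, i.e.\ under the hyperbolic-metric hypothesis that enters the theorem only in part (ii). In part (i), $\DD$ is merely a bounded domain, so your parenthetical appeal to a ``standard property of acting strictly inside for short times'' is unjustified in the stated generality. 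The paper avoids the issue by a different route: it differentiates \eqref{rep} at $t=0$ to identify the generator $B(x)=M(x)^{-1}\bigl(B_0M(x)-M'(x)[f(x)]\bigr)$, checks that $B$ is bounded on subsets strictly inside $\DD$ (Cauchy inequality for $M'$, Theorem~\ref{th-RS} for $f$, the hypothesis for $M^{-1}$), and then invokes the characterization of $T$-continuity in Theorem~\ref{th-differentiability}. You would need either to import that argument or to add a hypothesis guaranteeing that $\Ff$ acts strictly inside.

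Part (ii) is correct and differs from the paper mainly in how $\Gamma_T(x)^{-1}$ is controlled: you apply Gronwall to the paired equations for $\Gamma_s(x)$ and $\Gamma_s(x)^{-1}$ over the $\Ff$-invariant $\rho$-ball $B_\rho(x_0,r)$, on which the generator $B$ is bounded by Theorem~\ref{th-differentiability}; the paper instead bounds $\Gamma_{t_0}(x)$ by quoting Theorem~\ref{th-estim1} and bounds the inverse by subdividing time, using $\|\Gamma_{t/n}(x)-1_\A\|_\A<\frac12$ and the chain rule to get $\|\Gamma_{t_1}(x)^{-1}\|_\A\le 2^n$. Both arguments rest on the same geometric fact --- the finite-time orbit of $\DD^*$ lies in a finite $\rho$-ball centered at the fixed point, hence strictly inside $\DD$ --- and your version is, if anything, more streamlined; note only that the invertibility of $\Gamma_s(x)$, needed to write the second differential equation, is immediate here from \eqref{rep} since $M$ takes values in $\A_*$.
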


\begin{proof} (i)
	By Theorem~\ref{th-RS}, the generator $f$ of the semigroup
	$\Ff$ is bounded on each subset strictly inside $\DD$.  Differentiating \eqref{rep},
	we conclude that the semicocycle $\left\{\Gamma_t\right\}_{t\ge0}$
	is generated by the mapping $B\in\Hol(\DD,\A)$ defined by
	\[
	B(x)= M(x)^{-1}\left(B_0 M(x) - M'(x)[f(x)]\right).
	\]
	It follows from the Cauchy inequality (see \cite[Proposition 2.3 ]{R-S1}) that if $M$ is bounded on each subset strictly inside $\DD$ then the same holds for $M'$. Therefore the last formula implies that $B$ also is bounded on each subset strictly inside $\DD$.
	Hence by Theorem~\ref{th-differentiability}, $\left\{\Gamma_t\right\}_{t\ge0}$ is  $T$-continuous.
	
	(ii) Rewrite equality \eqref{rep} in the form
	$$M(x)=e^{-tB_0}M(F_t(x))\Gamma_t(x).$$
	Let $K=\max\left\{ \|M(x_0)\|_\A,\ \|M(x_0)^{-1}\|_\A\right\}$.
	Then there is a neighborhood $U$ of $x_0$ such that
	\[
	\sup_{x\in U} \left\{\|M(x)\|_\A,\ \|M(x)^{-1}\|_\A\right\} \le
	2K.
	\]
	
	Since $\Ff$ converges to $x_0$ as $t\to\infty$ uniformly on sets strictly inside $\DD$, for any domain $\DD^*$ strictly inside $\DD$, there is $t_0>0$ such that $F_t(x) \in U$ whenever $x\in \DD^*$ and $t\ge t_0$. Now by Theorem~\ref{th-estim1}  there are  constants $C$ and $L$ such that $\displaystyle\sup_{x\in \DD^*}\left\|\Gamma_{t_0}(x)\right\|_\A\le Ce^{Lt_0}. $ Hence
	$$\|M(x)\|_\A\le 2K \cdot Ce^{Lt_0} \cdot\|e^{-t_0B_0}\|_\A \quad\mbox{for all}\quad 	x\in \DD^*,$$
so $M$ is bounded on each subset strictly inside $\DD$.

By the proof of Lemma~\ref{lem-inside}, 	the set $\{F_t(x):\ x\in\DD^*,\  t\in[0,t_0] \}$ lies strictly inside $\DD$, so it is contained in  a $\rho$-ball $\mathcal B$ centered at $x_0$. Increasing, if needed, the $\rho$-radius of $\mathcal B$, we get $U\subset\mathcal{B}$. For this  $\rho$-ball $\mathcal B$  there is $t_1\ge t_0>0$  such that  $F_t(x) \in U$ whenever $x\in \mathcal B$ and $t\ge t_1$.
	
	Now, by Theorem~3.2 in \cite{EJK-multi}, the inverse values ${\Gamma_t(x)}^{-1}$ exist. Moreover, since the semicocycle is $T$-continuous, one can find a natural number $n$ such that $\|\Gamma_{t/n}(x)- 1_\A\|_{\A}<\frac12$ for all $x\in\mathcal B$ and $0\le t\le t_1$. Therefore
	\[
	\left\|\Gamma_{t/n}(x)^{-1}\right \|_\A \le \frac1{1- \|\Gamma_{t/n}(x)-1_\A\|_\A} < 2.
	\]
	Since the hyperbolic ball $\mathcal B$ is centered at the fixed point $x_0$, it is $\Ff$-invariant. Applying  the chain rule, we conclude that $\left\|\Gamma_{t_1}(x)^{-1}\right \|_\A\le 2^n.$ Therefore for 	every $x\in \DD^*\subset\mathcal B$ we have
	$$\|M(x)^{-1}\|_\A= 	\|\Gamma_{t_1}(x)^{-1}M(F_{t_1}(x))^{-1}e^{t_1B_0}\|_\A 	\le 2^n\cdot2K\cdot \|e^{t_1B_0}\|_\A, $$
that is, $M^{-1}$ is bounded on each subset strictly inside $\DD$.
\end{proof}

In addition, for a given semigroup $\Ff$ on a domain in a Banach space $X$, semicocycles over $\Ff$ can be used to construct new semigroups on spaces larger than $X$. This construction leads to a direct  connection between linearization of semigroups and linearization of semicocycles. The following fact can be verified by a straightforward calculation (for its first part, see Proposition~3.1 in \cite{EJK-multi}).

\begin{propo}\label{prop-konigs}
        Let $X$ and $Y$ be complex Banach spaces and $\A=L(Y)$. Assume that $\Ff=\{F_t\}_{t\ge0}\subset \Hol(\DD)$ is a semigroup on a domain $\DD\subset X$ and $\Gamma_t:\R^+\to\Hol(\DD, \A)$. The family $\left\{\Gamma_t\right\}_{t\ge0}$ is a semicocycle over $\Ff$ if and only if the family ${\widetilde \Ff} =\left\{\widetilde {F}_t\right\}_{t\ge0}$ defined by  $\widetilde{F}_t(x,y)= \left(F_t (x), \Gamma_t (x) y \right)$ forms a  semigroup on the domain  $\DD\times Y$.

        If, in addition, the semigroup $\mathcal{F}$ is linearizable by a biholomorphic mapping $h\in\Hol(\DD,X)$ in the sense of \eqref{konigs} and  $\left\{\Gamma_t\right\}_{t\ge0}$ is represented in the form \eqref{rep}, then the  semigroup $\widetilde \Ff$  is linearizable by the mapping $\widetilde h$ defined by $\widetilde{h}(x,y)=\left(h(x),M(x)y\right)$.
\end{propo}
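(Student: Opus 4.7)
The plan is to verify each assertion by direct computation, since the proposition is essentially an algebraic identification. I would treat the two claims in sequence, and in each case the work reduces to reading off an identity.

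For the first equivalence, I would simply expand
$$\widetilde F_t(\widetilde F_s(x,y))=\widetilde F_t(F_s(x),\Gamma_s(x)y)=\bigl(F_t(F_s(x)),\, \Gamma_t(F_s(x))\Gamma_s(x)y\bigr),$$
and compare with $\widetilde F_{t+s}(x,y)=(F_{t+s}(x),\Gamma_{t+s}(x)y)$. Since $y\in Y$ is arbitrary and $\Ff$ is already a semigroup, the semigroup identity for $\widetilde{\Ff}$ is equivalent to the chain rule $\Gamma_t(F_s(x))\Gamma_s(x)=\Gamma_{t+s}(x)$. Likewise, the continuity condition $\widetilde F_t(x,y)\to(x,y)$ as $t\to 0^+$ reduces, in view of $F_t(x)\to x$, to $\Gamma_t(x)y\to y$ for every $y\in Y$, which is exactly $\Gamma_t(x)\to 1_\A$. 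Thus both directions of the equivalence fall out simultaneously.

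For the second claim, I would construct the conjugating operator explicitly: on $X\times Y$, define $\widetilde A(u,v)=(Au,B_0 v)$, so that $e^{t\widetilde A}(u,v)=(e^{tA}u,\, e^{tB_0}v)$. The crucial computation is then
$$\widetilde h(\widetilde F_t(x,y))=\bigl(h(F_t(x)),\, M(F_t(x))\Gamma_t(x)y\bigr)=\bigl(e^{tA}h(x),\, e^{tB_0}M(x)y\bigr)=e^{t\widetilde A}\widetilde h(x,y),$$
using $h\circ F_t=e^{tA}\circ h$ together with the representation $\Gamma_t(x)=M(F_t(x))^{-1}e^{tB_0}M(x)$, in which the factor $M(F_t(x))$ telescopes against its inverse. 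Noncommutativity of $\A$ causes no trouble because the only cancellation invoked is an adjacent pair $M(F_t(x))\cdot M(F_t(x))^{-1}$.

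To complete the argument I would check that $\widetilde h(x,y)=(h(x),M(x)y)$ is biholomorphic. Its inverse is explicitly $\widetilde h^{-1}(u,v)=\bigl(h^{-1}(u),\, M(h^{-1}(u))^{-1}v\bigr)$, which is well-defined because $M$ takes values in $\A_*$ and $h$ is biholomorphic by hypothesis; holomorphy of $\widetilde h$ and $\widetilde h^{-1}$ then follows from holomorphy of $h$, $h^{-1}$, $M$, and of the inversion map on $\A_*$. I do not anticipate any real obstacle in this proof; the one point requiring attention is respecting the order of multiplication in $\A$ when performing the telescoping cancellation, and the verification that the pointwise convergence $\Gamma_t(x)y\to y$ for all $y\in Y$ correctly matches the semicocycle continuity condition in the operator algebra $\A=L(Y)$.
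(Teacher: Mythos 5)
Your computation is precisely the ``straightforward calculation'' the paper invokes for this proposition (it gives no further details, deferring the first part to Proposition~3.1 of \cite{EJK-multi}), so the approach coincides, and the second part---the telescoping of $M(F_t(x))$ against $M(F_t(x))^{-1}$ and the explicit inverse of $\widetilde h$---is correct as written. The one step you flag but do not settle, namely that $\Gamma_t(x)y\to y$ for every $y\in Y$ is ``exactly'' $\Gamma_t(x)\to 1_\A$, is the only non-formal point: the semigroup condition on $\DD\times Y$ gives strong operator convergence of $\Gamma_t(x)$ to the identity, whereas Definition~\ref{semicocycle}(b) asks for convergence in the norm of $\A=L(Y)$, so the ``only if'' direction requires either justifying this upgrade or reading the continuity condition in the strong topology---a gap the paper's own one-line proof shares rather than resolves.
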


The semigroup $\widetilde{\Ff}$ was studied in \cite{E-2011} as an extension operator for semigroups of holomorphic mappings. We see that such operators necessarily involve semicocycles. Note also that the extended semigroup $\mathcal{\widetilde F}$ is sometimes referred to as a linear skew-product flow; see, for example \cite{Latus}.

\section{Main results}\label{Section-linearization}
\setcounter{equation}{0}

We start this section with a simple  sufficient condition for a semicocycle $\left\{\Gamma_t\right\}_{t\ge0}$ to be linearizable.

\begin{propo}\label{cr}
    Let a semigroup $\Ff=\left\{F_t\right\}_{t\ge0}\subset\Hol(\DD)$ have
    an attractive fixed point $x_0 \in \DD$ and
$\left\{\Gamma_t\right\}_{t\ge0}\subset\Hol(\DD,\A)$ be a
    semicocycle over~$\Ff$. Denote $B_0=\left.\frac{d}{dt}\Gamma_t(x_0)\right|_{t=0}.$ If the limit
    \begin{equation}\label{limit1}
    \lim_{t\rightarrow \infty }e^{-tB_0}\Gamma_t(x)=:M(x)
    \end{equation}
    exists locally uniformly, then the semicocycle $\left\{\Gamma_t\right\}_{t\ge0}$ can be represented in the form \eqref{rep}.
\end{propo}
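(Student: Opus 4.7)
My plan is to derive the identity $e^{tB_0}M(x)=M(F_t(x))\Gamma_t(x)$ directly from the chain rule (Definition~\ref{semicocycle}(a)) by passing to the limit in $t$, and then invert $M(F_t(x))$ to obtain \eqref{rep}. The preparatory observation is that since $x_0$ is a fixed point of $\Ff$, the chain rule specialised at $x_0$ gives $\Gamma_{t+s}(x_0)=\Gamma_t(x_0)\Gamma_s(x_0)$, so $t\mapsto \Gamma_t(x_0)$ is a one-parameter semigroup in $\A$ that is continuous at the origin (condition~(b) of Definition~\ref{semicocycle}) and, by hypothesis, differentiable there with derivative $B_0$. By the standard theory of one-parameter semigroups in a Banach algebra this forces $\Gamma_t(x_0)=e^{tB_0}$, hence $M(x_0)=\lim_{t\to\infty}e^{-tB_0}e^{tB_0}=1_\A$. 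Moreover, as a locally uniform limit of $\A$-valued holomorphic mappings, $M$ itself is holomorphic on $\DD$.

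Next, for fixed $s\ge0$ and $x\in\DD$, I would apply the chain rule in the form $\Gamma_{t+s}(x)=\Gamma_t(F_s(x))\Gamma_s(x)$ and multiply both sides on the left by $e^{-(t+s)B_0}=e^{-sB_0}e^{-tB_0}$:
\begin{equation*}
e^{-(t+s)B_0}\Gamma_{t+s}(x)=e^{-sB_0}\bigl(e^{-tB_0}\Gamma_t(F_s(x))\bigr)\Gamma_s(x).
\end{equation*}
The left-hand side tends to $M(x)$ as $t\to\infty$ by the very definition \eqref{limit1}, while the bracketed factor on the right converges to $M(F_s(x))$ because, by the attractivity of $x_0$, the point $F_s(x)$ lies in a set on which the convergence in \eqref{limit1} is uniform. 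Passing to the limit therefore yields
\begin{equation*}
M(x)=e^{-sB_0}M(F_s(x))\Gamma_s(x),
\end{equation*}
which is exactly the desired representation \eqref{rep} once $M(F_s(x))$ is known to be invertible.

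The remaining point is to verify that $M$ takes values in $\A_*$, so that \eqref{rep} makes sense as in Proposition~\ref{propo-1}. I would argue as follows: by continuity of $M$ and the fact that $M(x_0)=1_\A$, there is a neighbourhood $U$ of $x_0$ on which $M$ is invertible (invertibility is an open condition in a unital Banach algebra). For arbitrary $x\in\DD$, attractivity gives some $s>0$ with $F_s(x)\in U$, so $M(F_s(x))\in\A_*$; then the identity displayed above, together with the invertibility of $e^{-sB_0}$ and of $\Gamma_s(x)$ (Theorem~3.2 of \cite{EJK-multi}, cited just after Example~\ref{examp-uniq}), forces $M(x)\in\A_*$. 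Rewriting the identity as $\Gamma_s(x)=M(F_s(x))^{-1}e^{sB_0}M(x)$ completes the proof.

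The only step I expect to require a little care is the interchange of the limit $t\to\infty$ with the product on the right-hand side; this is where the \emph{local uniformity} in the hypothesis is indispensable, because we need the convergence of $e^{-tB_0}\Gamma_t(\cdot)$ to be uniform on a neighbourhood of $F_s(x)$ (or at least along the sequence $F_s(x)$) in order to substitute $F_s(x)$ for the spatial variable. Everything else is formal manipulation of the cocycle identity.
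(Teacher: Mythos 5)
Your argument is correct and is essentially the paper's own: the paper proves Proposition~\ref{cr} as the special case $N\equiv 1_\A$ of Theorem~\ref{th-present-Gamma1}, whose proof derives the same identity $M(F_s(x))\Gamma_s(x)=e^{sB_0}M(x)$ by passing to the limit in the chain rule and then establishes invertibility of $M$ via $M(x_0)=1_\A$, the attractivity of $x_0$, and the invertibility of $\Gamma_s(x)$. Your only cosmetic additions (the explicit remark that $\Gamma_t(x_0)=e^{tB_0}$ and the explicit citation for invertibility of semicocycle values) are points the paper uses implicitly in the same places.
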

This result is a very special case of Theorem~\ref{th-present-Gamma1} below.

\begin{example}\label{ex-2}
Let $X=c_0$ be the space of all sequences converging to zero. Let $\Gamma_t$ be defined by $ \Gamma_t(x)=\exp\left[ \sum\limits_{k=1}^\infty \left(2x_k\right)^k \left( 1- e^{-tk} \right)\right].$ It was shown in \cite[Example~3.1]{EJK-multi} that the family $\{\Gamma_t\}_{t\ge0}$ is a semicocycle over the linear semigroup $\{e^{-t}\cdot\}_{t\ge0}$. It is easy to see that $B_0=\left.\frac{d}{dt}\Gamma_t(0)\right|_{t=0}=0$ and $M(x)=\lim\limits_{t\to\infty} \Gamma_t(x) =\exp\left[ \sum\limits_{k=1}^\infty \left(2x_k\right)^k \right]$, uniformly on subsets strictly inside the unit ball of $c_0$. So, by Proposition~\ref{cr}, $\Gamma_t(x)=M(e^{-t}x)^{-1}M(x)$. This representation can also be verified directly.
\end{example}

   Next we present a condition on the generator $B$ under which the existence of a linearization is ensured.

\begin{propo}\label{sc}
    Let a semigroup $\mathcal{F}=\left\{F_t\right\}_{t\ge0}\subset\Hol(\DD)$ have an attractive fixed point $x_0 \in \DD$. Let $\left\{\Gamma_t\right\}_{t\ge0}$
    be a semicocycle over $\Ff$ generated by a mapping $B\in\Hol(\DD,\A)$ and $B_0=B(x_0)$.     If for every subset $\DD^*$ strictly inside $\DD$ there is a constant $L$ such that the integral
\begin{equation}\label{cc}
   L(x):= \int_0^\infty \left\|\exp(-tB_0)B(F_t(x))\exp(tB_0)-B_0\right\|_\A  dt
\end{equation}
converges uniformly with respect to $x\in\DD^*$ and $L(x)\le L$, then the limit \eqref{limit1} exists uniformly on each subset strictly inside $\DD$. Hence
$\left\{\Gamma_t\right\}_{t\ge0}$ admits a representation \eqref{rep}.
\end{propo}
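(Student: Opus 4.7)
The plan is to introduce the auxiliary family $N(t,x) := e^{-tB_0}\Gamma_t(x)$ and to show that $N(t,\cdot)$ converges uniformly on each subset $\DD^*$ strictly inside $\DD$ as $t\to\infty$; once this is done, the desired representation \eqref{rep} follows directly from Proposition~\ref{cr}. Since by Theorem~\ref{th-sol-cauchy} the semicocycle solves $\frac{d}{dt}\Gamma_t(x) = B(F_t(x))\Gamma_t(x)$, and since $B_0$ commutes with $e^{-tB_0}$, a direct differentiation of $N$ gives the non-autonomous linear ODE
\begin{equation*}
\frac{d}{dt}N(t,x) \;=\; A(t,x)\,N(t,x), \qquad N(0,x) = 1_\A,
\end{equation*}
where $A(t,x) := e^{-tB_0}B(F_t(x))e^{tB_0} - B_0$ is precisely the Banach-algebra-valued expression whose norm appears as the integrand in \eqref{cc}.

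Fix now a subset $\DD^*$ strictly inside $\DD$ and let $L$ be the constant from the hypothesis, so that $\int_0^\infty \|A(s,x)\|_\A\, ds = L(x) \le L$ uniformly in $x\in\DD^*$. Applying Gronwall's inequality to the scalar function $t\mapsto\|N(t,x)\|_\A$, which satisfies the differential inequality $\frac{d}{dt}\|N(t,x)\|_\A \le \|A(t,x)\|_\A\cdot\|N(t,x)\|_\A$ by submultiplicativity of $\|\cdot\|_\A$, produces the a priori bound
\begin{equation*}
\|N(t,x)\|_\A \;\le\; \exp\!\Bigl(\int_0^t \|A(s,x)\|_\A\, ds\Bigr) \;\le\; e^{L}
\end{equation*}
valid for every $t\ge 0$ and every $x\in\DD^*$. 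Integrating the ODE from $t_1$ to $t_2$ (with $0\le t_1<t_2$) and combining with this bound yields
\begin{equation*}
\|N(t_2,x) - N(t_1,x)\|_\A \;\le\; e^{L}\int_{t_1}^{t_2}\|A(s,x)\|_\A\, ds.
\end{equation*}

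The uniform convergence of the improper integral defining $L(x)$ means that the tail $\int_{t_1}^\infty\|A(s,x)\|_\A\, ds$ tends to zero uniformly in $x\in\DD^*$ as $t_1\to\infty$; hence $\{N(t,\cdot)\}_{t\ge 0}$ is uniformly Cauchy on $\DD^*$ and so converges uniformly there to some mapping $M$, which is exactly the hypothesis of Proposition~\ref{cr}. The argument is essentially a variation-of-constants (integrating-factor) trick transferred from scalar ODEs to the Banach-algebra-valued setting, and I do not expect any genuine obstacle. The only point requiring care is keeping every estimate uniform in $x\in\DD^*$, which is handled precisely by the assumed uniform bound $L(x)\le L$; the Gronwall step itself goes through unchanged, since submultiplicativity of $\|\cdot\|_\A$ reduces it to a scalar differential inequality.
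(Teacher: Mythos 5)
Your proposal is correct and follows essentially the same route as the paper's proof: the paper also sets $v(t)=e^{-tB_0}\Gamma_t(x)$, derives the nonautonomous evolution equation with coefficient $\widetilde B(t)=e^{-tB_0}B(F_t(x))e^{tB_0}-B_0$, obtains the bound $\|v(t)\|_\A\le e^{L}$ (citing Almkvist where you invoke Gronwall), and verifies the uniform Cauchy criterion from the integral form before appealing to Proposition~\ref{cr}. The only cosmetic difference is that the paper works directly with the integral equation $v(t)=1_\A+\int_0^t\widetilde B(s)v(s)\,ds$ rather than differentiating $t\mapsto\|N(t,x)\|_\A$, which sidesteps any question of differentiability of the norm.
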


\begin{proof}
Let $\DD^*$ be a subset strictly inside $\DD$. For any $x\in \DD^*$, denote $v(t)=\exp(-tB_0) \Gamma_t(x)$. Then
\begin{equation*}\label{for_representation}
v'(t)=\left(\exp(-tB_0)B(F_t(x))\exp(tB_0)-B_0\right)v(t),
\end{equation*}
so $v$ is the unique solution of the evolution problem
\begin{equation*}
\left\{
\begin{array}{l}
\displaystyle v'(t) =\widetilde{B}(t)v(t) \vspace{2mm} \\
v(0)=1_\A,
\end{array}%
\right.
\end{equation*}
where $\widetilde{B}(t)=\exp(-tB_0)B(F_t(x))\exp(tB_0)- B_0 $.
Since the integral in \eqref{cc} converges uniformly on $\DD^*$, it follows from a result by Almkvist \cite{AG1964} that
\begin{equation}\label{estim-u2}
\|v(t)\|_\A\le \exp \left(\int_0^t \|\widetilde{B}(s)\|_\A ds
\right)<e^L
\end{equation}
(independently of $x$); see also \cite[Theorem 4.1]{EJK-multi}. In addition,
\begin{equation*}\label{integral-form2}
v(t) =  1_\A + \int_0^t \widetilde{B}(s)v(s) ds.
\end{equation*}
(see, for example, \cite{M-S, Kr}). Using  \eqref{estim-u2}, we conclude that for any $\varepsilon>0$
there exists $t_\varepsilon>0$ such that
\begin{equation*}\label{integral-form-cauchy}
\|v(t_2)-v(t_1)\|_\A= \left\|\int_{t_1}^{t_2}\widetilde{B}(s)v(s)
ds \right\|_\A \leq  e^L\int_{t_1}^{t_2}\|\widetilde{B}(s)\|_\A ds
< \varepsilon
\end{equation*}
for all $t_2>t_1>t_\varepsilon$. Consequently, the limit
\[
\lim\limits_{t \to \infty} v(t)= \lim_{t\rightarrow \infty
}e^{-tB_0}\Gamma_t(x)
\]
exists uniformly on $\DD^*$ by the Cauchy criterion of convergence. Hence
$\left\{\Gamma_t\right\}_{t\ge0}$ admits a representation
\eqref{rep} by Proposition~\ref{cr}.
\end{proof}

While the conditions given by Propositions \ref{cr} and \ref{sc} are useful, they are sufficient but {\it{not}} necessary for the
existence of a representation (\ref{rep}). This fact is shown by the following example.

\begin{example}\label{ex1}
Let  $\DD$ be the open unit disk in the complex plane $\C$ and $\A=\C^{2\times2}$. Consider the linear semigroup $\mathcal{F}=\{F_t=e^{-t}\cdot \} _{t\ge0}$ and denote
    $$B_0=\left( \begin{array}{cc}
    3 & 0 \\
    0 & 1
    \end{array} \right),\;\;\;M(x)=\left( \begin{array}{cc}
    1 & x \\
    x & 1
    \end{array} \right).$$
   Define
\begin{eqnarray*}
    \Gamma_t(x)&:=&M(F_t(x))^{-1}e^{tB_0}M(x) \\
    &=&\frac{1}{1-e^{-2t}x^2} \left( \begin{array}{cc} e^{3t}-x^2 & x(e^{3t}-1) \\
    x(e^t-e^{2t}) & e^t-x^2 e^{2t}
\end{array} \right).
\end{eqnarray*}
By construction, $\Gamma_t$ has the representation \eqref{rep}.
However, the limit
$$\lim_{t \rightarrow \infty}e^{-tB_0}\Gamma_t(x)=\lim_{t \rightarrow \infty}\frac{1}{1-e^{-2t}x^2}
\left( \begin{array}{cc}
1-e^{-3t}x^2 & x(1-e^{-3t}) \\
x(1-e^{t}) & 1-x^2 e^{t}
\end{array} \right) $$
does not exist. Therefore, as it follows from the proof of Proposition~\ref{sc}, the integral~\eqref{cc} diverges.
\end{example}

The convergence of the  improper integral in~\eqref{cc} depends on
two independent objects: the semigroup $\Ff$ and the semicocycle
generator~$B$. Intuitively, if the rate of convergence of $B(F_t(x))$ to $B_0$
is sufficiently fast, the integral~\eqref{cc} will
converge. Therefore, one can expect that
some condition on the values $A=f'(x_0)$ and $B_0=B(x_0)$ may ensure
the convergence of that integral.  To formulate such condition, we recall that for an element $a$ of a Banach algebra, the upper
and lower exponential (Lyapunov) indices of $a$  (see, for example, \cite{Kr} or \cite{D-Sch1958}) by
\begin{equation}\label{kappa+}
  \kappa_+(a):=   \limsup_{t\rightarrow \infty}\frac{1}{t} \log\|e^{ta}\|
\end{equation}
and
\begin{equation}\label{kappa-}
  \kappa_-(a):= -   \limsup_{t\rightarrow \infty}\frac{1}{t} \log\|e^{-ta}\|.
\end{equation}
We now define the {\it characteristic ratio} by
\begin{equation}\label{ell}
\ell:= \frac{\kappa_+(B_0)-\kappa_-(B_0)} {|\kappa_+(A)|}\,, \quad \mbox{where }\ A=f'(x_0).
\end{equation}

In the sequel we consider the following condition on a semigroup:
\begin{itemize}
  \item [(*)] The semigroup $\mathcal{F}\subset\Hol(\DD)$ converges to a point $x_0\in\DD$ uniformly on subsets strictly inside $\DD$ and is generated by a mapping $f$  such that
\begin{equation}\label{est-num-0}
\kappa_+(A)<0,\quad \mbox{where }\ A=f'(x_0);
\end{equation}
\end{itemize}

In fact, condition (*) provides a local estimate of the rate of convergence of $F_t(x)$ to $x_0$ as $t\to\infty$ (see, for example, \cite{D-S}). Moreover, in the case where $\DD$ is the open unit ball in $X$, inequality \eqref{est-num-0} by itself implies the uniform convergence on every subset strictly inside $\DD$; see \cite{E-R-S-04}.

\begin{theorem}\label{sc1}
Let $\DD$ be a bounded domain equipped with the hyperbolic metric. Let a semigroup $\Ff\subset \Hol(\DD)$ satisfy condition (*) and $\left\{\Gamma_t\right\}_{t\ge0}\subset\Hol(\DD, \A)$ be a $T$-continuous semicocycle over $\Ff$ generated by $B\in\Hol(\DD,\A)$ with $B_0=B(x_0)$.\\
If $
\displaystyle\lim_{x\to x_0}\frac{\|B(x)-B_0 \|_\A} {\|x-x_0\|_{_X}^{\ell}} = 0,
$
where the characteristic ratio $\ell$ is defined in \eqref{ell}, then $\left\{\Gamma_t\right\}_{t\ge0}$ can be represented in the form~\eqref{rep}, where $M$ is defined by limit \eqref{limit1}.
\end{theorem}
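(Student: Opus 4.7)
The plan is to verify the hypothesis of Proposition~\ref{sc}: for every set $\DD^*$ strictly inside $\DD$, the integral $L(x)$ from \eqref{cc} must converge uniformly on $\DD^*$ with a bound independent of $x\in\DD^*$. The key difficulty is that the characteristic ratio $\ell$ is defined exactly so that $\ell\,|\kappa_+(A)|=\kappa_+(B_0)-\kappa_-(B_0)$; feeding the Lyapunov-index estimates straight into the integrand produces a boundary exponent equal to $0$ (up to a harmless $\varepsilon$), so the hypothesis $\|B(x)-B_0\|_\A=o(\|x-x_0\|_X^\ell)$ must be sharpened into a strict exponent gain by exploiting the holomorphy of $B$.

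First, I would upgrade the little-$o$ hypothesis to a polynomial bound. Expanding $B$ at $x_0$ in continuous homogeneous polynomials, $B(x)=B_0+\sum_{n\ge1}P_n(x-x_0)$, the Cauchy inequality $\|P_n\|\le r^{-n}\sup_{\|x-x_0\|=r}\|B(x)-B_0\|_\A$ combined with the hypothesis forces $P_n\equiv 0$ for every integer $n\le\ell$: for $n<\ell$ the right-hand side tends to $0$ as $r\to 0$, while for $n=\ell$ it is bounded by an arbitrarily small constant. Setting $\ell':=\lfloor\ell\rfloor+1>\ell$, this yields a neighborhood $V$ of $x_0$ and a constant $C_1$ with
\[
\|B(x)-B_0\|_\A \le C_1\,\|x-x_0\|_X^{\ell'}\qquad\text{for all }x\in V.
\]

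Next I collect the asymptotic data. By \eqref{kappa+}--\eqref{kappa-}, for every $\varepsilon>0$ there is $K_\varepsilon>0$ with $\|e^{tB_0}\|_\A\le K_\varepsilon e^{(\kappa_+(B_0)+\varepsilon)t}$ and $\|e^{-tB_0}\|_\A\le K_\varepsilon e^{(-\kappa_-(B_0)+\varepsilon)t}$. Since $\kappa_+(A)<0$ by~(*), the local exponential contraction near $x_0$ (cf.\ the discussion following condition~(*) and \cite{D-S}) provides a smaller neighborhood $U\subset V$ and $K_0>0$ such that $\|F_s(y)-x_0\|_X\le K_0 e^{(\kappa_+(A)+\varepsilon)s}\|y-x_0\|_X$ for $y\in U$, $s\ge 0$. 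By the uniform convergence of $F_t$ to $x_0$ on $\DD^*$ (the first part of~(*)) I pick $t_1>0$ with $F_{t_1}(\DD^*)\subset U$; the semigroup law then yields a constant $C_2>0$, depending on $\DD^*$, such that $F_t(\DD^*)\subset V$ and $\|F_t(x)-x_0\|_X\le C_2 e^{(\kappa_+(A)+\varepsilon)t}$ for all $x\in\DD^*$ and $t\ge t_1$.

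Combining these estimates, for $t\ge t_1$ and $x\in\DD^*$ the integrand of \eqref{cc} satisfies
\[
\left\|e^{-tB_0}B(F_t(x))e^{tB_0}-B_0\right\|_\A \le K_\varepsilon^2\, C_1\, C_2^{\ell'}\, e^{\left[(\ell-\ell')|\kappa_+(A)|+(2+\ell')\varepsilon\right]t},
\]
where I used $\kappa_+(B_0)-\kappa_-(B_0)=\ell\,|\kappa_+(A)|$ and $\kappa_+(A)=-|\kappa_+(A)|$ to simplify the exponent. Since $\ell'>\ell$ and $|\kappa_+(A)|>0$, choosing $\varepsilon$ small enough renders the bracketed exponent strictly negative, so the tail integral over $[t_1,\infty)$ converges uniformly on $\DD^*$. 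On $[0,t_1]$ the integrand is uniformly bounded: the orbit $\{F_t(x):x\in\DD^*,\,t\in[0,t_1]\}$ lies strictly inside $\DD$ (by the remark following Lemma~\ref{lem-inside}, applicable because $\DD$ carries a hyperbolic metric and each $F_t$ is $\rho$-nonexpansive), $B$ is bounded on such sets by Theorem~\ref{th-differentiability}, and $\|e^{\pm tB_0}\|_\A$ is continuous on the compact interval. Thus the full integral is uniformly bounded on $\DD^*$, and Proposition~\ref{sc} delivers the representation \eqref{rep} with $M$ given by \eqref{limit1}. The most delicate point is the Cauchy-inequality argument producing the strict gain $\ell'>\ell$ — without it, the defining equality of $\ell$ would leave the stubborn term $(2+\ell)\varepsilon$ in the exponent, which no choice of $\varepsilon$ could defeat.
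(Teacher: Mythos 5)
Your proof is correct and follows essentially the same route as the paper's: both reduce to Proposition~\ref{sc} by splitting the integral at a time $t^*$ after which the orbit enters a small neighborhood of $x_0$, and both exploit a strict exponent gain $\delta=\ell'-\ell>0$ so that the tail exponent becomes $\delta\kappa_+(A)+O(\varepsilon)<0$. The only difference is that you explicitly justify, via the homogeneous-polynomial expansion and Cauchy inequalities, the upgrade from the little-$o$ hypothesis to the bound $\|B(x)-B_0\|_\A\le C_1\|x-x_0\|_X^{\ell+\delta}$, which the paper simply asserts as following ``from the assumptions''; this is a worthwhile clarification, since for merely continuous $B$ that implication would fail.
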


\begin{proof}
It follows from the assumptions that for some neighborhood $U$ of~$x_0$ strictly inside $\DD$ there are $L>0$ and $\delta \in (0,1]$ such that
 \begin{equation}\label{est-B}
\|B(\tilde x)-B_0\|_\A \leq L \|\tilde x -x_0\|_X^{\ell+\delta}\quad\mbox{ for all }\quad  \tilde x \in U.
\end{equation}
For any $\epsilon>0$, one can find a constant $C>0$ and a neighborhood $U^*$ of~$x_0,\  U^*\subset U$, such that
\[
\|F_t(x)-x_0\|_X \le Ce^{t(\kappa_+(A) +\epsilon)}%\|x-x_0\|_X
\quad \mbox{for all}\quad x\in U^*,\ t\ge0
\]
(see, for example, \cite{D-S}).

Let $\DD^*$ be a subset strictly inside $\DD$. Due to condition (*), $\Ff$ converges uniformly on $\DD^*$. So there is a  positive number  $t^*$ such that $F_t(x)\in U^*$ whenever $t\ge t^*$ and $x\in\DD^*$.
Therefore for all $t>0$ and  $x\in\DD^*$ we have
\[
\|F_{t+t^*}(x)-x_0\|_X \le Ce^{t(\kappa_+(A) +\epsilon)}%\|F_{t^*}(x)-x_0\|_X
.
\]
Substituting $\tilde x=F_{t+t^*}(x)$ in \eqref{est-B},  we conclude that for all $x\in\DD^*$ the last inequality implies
\begin{eqnarray}\label{estim-B}
\left\| B(F_{t+t^*}(x))-B_0 \right\|_\A %\leq L{\|F_{t+t^*}(x)-x_0\|_X}^{\ell+1}\nonumber\\
\le LC^{\ell+\delta}e^{t(\kappa_+(A) +\epsilon)(\ell+\delta)}.
\end{eqnarray}
We now use this   to estimate the integral~\eqref{cc}. By Lemma~\ref{lem-inside}, the set
$\Omega:=\{F_t(x):\ x\in\DD^*,\ t\in[0,t^*] \}$ lies strictly
inside $\DD$. Then, by Theorem~\ref{th-differentiability} the  mapping $B$ is bounded on
$\Omega$, and hence the integrals $$ \int_0^{t^*}
\left\|\exp(-B_0 t)B(F_t(x))\exp(B_0
   t)-B_0\right\|_\A  dt$$ are uniformly bounded on $\DD^*$.

To proceed, recall (see, for
example, \cite{Kr} or \cite{D-Sch1958}) that
there are constants $L_1=L_1(\epsilon)$ and $L_2=L_2(\epsilon)$
such that
    \[
\left\|e^{tB_0}\right\|_\A \le L_1 e^{t(\kappa_+(B_0) +\epsilon)}
\quad\mbox{and}\quad \left\|e^{-tB_0}\right\|_\A \le L_2
e^{t(\epsilon-\kappa_-(B_0))}.
    \]

Therefore
\begin{eqnarray*}
&&\|\exp(-B_0 t)B(F_t(x))\exp(B_0 t)-B_0\|_\A \\
&=& \|\exp(-B_0 t)[B(F_t(x))-B_0]\exp(B_0 t)\|_\A \\
&\leq& \|\exp(-B_0 t)\|_\A \cdot \|B(F_t(x))-B_0\|_\A \cdot \|\exp(B_0 t)\|_\A\\
&\le& L_1L_2\exp\left[t(\kappa_+(B_0)-\kappa_-(B_0)+2\epsilon)\right] \cdot
\|B(F_t(x))-B_0\|_\A.
\end{eqnarray*}

Thus \eqref{estim-B} implies that for all $t\ge t^*$
\begin{eqnarray*}
\|\exp(-B_0 t)B(F_t(x))\exp(B_0 t)-B_0\|_\A   \leq \widetilde{C}
e^{t(\delta\kappa_+(A)+(2+\ell+\delta)\epsilon)},
\end{eqnarray*}
where $\widetilde{C}=LL_1L_2C^{\ell+\delta}.$

Since $\delta \kappa_+(A)<0$ and $\epsilon$ can be chosen arbitrary close to zero, we conclude that the
integrals $$\int_{t^*}^\infty \left\|\exp(-B_0 t)B(F_t(x))\exp(B_0
   t)-B_0\right\|_\A  dt$$ converge uniformly on $\DD^*$ and are uniformly bounded. Thus our
conclusion follows by Proposition~\ref{sc}.
\end{proof}

\begin{corol}
If under conditions of Theorem~\ref{sc1}, $\ell<1$, that is,
$$\kappa_+(B_0)-\kappa_-(B_0)+\kappa_+(A)<0,$$
then $\left\{\Gamma_t\right\}_{t\ge0}$ can be represented in the form~\eqref{rep}.
\end{corol}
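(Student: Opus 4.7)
The plan is to deduce this corollary directly from Theorem~\ref{sc1} by checking that its hypothesis
$$\lim_{x\to x_0}\frac{\|B(x)-B_0\|_\A}{\|x-x_0\|_{_X}^{\ell}}=0$$
holds whenever $\ell<1$. The only real work is to compare the holomorphic regularity of $B$ at $x_0$ with the exponent $\ell$.

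First I would record that the characteristic ratio is always nonnegative. Since $\|e^{tB_0}\|_\A\cdot\|e^{-tB_0}\|_\A\ge\|1_\A\|_\A=1$, one has $\kappa_+(B_0)\ge\kappa_-(B_0)$, so $\ell\ge 0$; together with condition~(*) (which makes $|\kappa_+(A)|>0$) this shows that $\ell$ is well defined and finite, and the hypothesis $\ell<1$ is equivalent to the displayed inequality on the exponential indices, as asserted.

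Next I would exploit the holomorphy of $B$. By Theorem~\ref{th-differentiability}, a $T$-continuous semicocycle has a generator $B\in\Hol(\DD,\A)$ bounded on subsets strictly inside $\DD$; in particular $B$ is Fr\'echet differentiable at $x_0$. Hence there exist a neighborhood $U\subset\DD$ of $x_0$ and a constant $C>0$ such that
$$\|B(x)-B_0\|_\A\le C\|x-x_0\|_{_X}\qquad\text{for all }x\in U.$$
When $0<\ell<1$ this yields
$$\frac{\|B(x)-B_0\|_\A}{\|x-x_0\|_{_X}^{\ell}}\le C\,\|x-x_0\|_{_X}^{1-\ell}\xrightarrow[x\to x_0]{}0,$$
since $1-\ell>0$. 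In the degenerate case $\ell=0$ the ratio reduces to $\|B(x)-B_0\|_\A$, which tends to zero by continuity of $B$ at $x_0$. Either way the hypothesis of Theorem~\ref{sc1} is satisfied.

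Finally I would invoke Theorem~\ref{sc1} to conclude that $\{\Gamma_t\}_{t\ge 0}$ admits the representation~\eqref{rep}, with $M$ given by the limit \eqref{limit1}. There is no substantive obstacle here; the corollary is essentially the observation that the holomorphic (first-order) vanishing of $B(x)-B_0$ at $x_0$ dominates any sub-linear power weight $\|x-x_0\|^{\ell}$ with $\ell<1$, so the sufficient condition of Theorem~\ref{sc1} becomes automatic in this regime.
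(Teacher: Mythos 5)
Your proof is correct and follows the route the paper intends: the corollary is stated without proof precisely because holomorphy of $B$ at $x_0$ gives $\|B(x)-B_0\|_\A=O(\|x-x_0\|_X)$, which dominates $\|x-x_0\|_X^{\ell}$ whenever $\ell<1$, so the hypothesis of Theorem~\ref{sc1} is automatic. Your verification that $\ell\ge 0$ (from $\|e^{tB_0}\|_\A\,\|e^{-tB_0}\|_\A\ge 1$) and your separate treatment of the degenerate case $\ell=0$ are both sound and complete the argument.
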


In particular, applying the case $B_0=B(x_0)=0$ we obtain immediately
\begin{corol}
Let $\DD$ be a bounded domain equipped with the hyperbolic metric. Let a semigroup $\Ff\subset \Hol(\DD)$ satisfy condition (*) and let a $T$-continuous semicocycle $\left\{\Gamma_t\right\}_{t\ge0}\subset\Hol(\DD, \A)$ over $\Ff$ satisfy $\left. \frac{d\Gamma_t(x_0)}{dt}\right|_{t=0}=0 $. Then $\left\{\Gamma_t\right\}_{t\ge0}$ is a coboundary, in particular, is linearizable.
\end{corol}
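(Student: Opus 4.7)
The plan is to derive this corollary as a direct specialization of Theorem~\ref{sc1} with $B_0=0$. The observation driving the argument is that setting $B_0=B(x_0)=0$ collapses both the exponential conjugation $e^{tB_0}(\cdot)e^{-tB_0}$ to the identity and the constant factor $e^{tB_0}$ in the representation~\eqref{rep} to $1_\A$, so the resulting linearization is automatically a coboundary.

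First I would compute the characteristic ratio $\ell$. Since $B_0=0$, one has $e^{tB_0}=1_\A=e^{-tB_0}$ for all $t\ge 0$, so by the definitions \eqref{kappa+}--\eqref{kappa-},
\[
\kappa_+(B_0)=\limsup_{t\to\infty}\frac{1}{t}\log\|1_\A\|_\A=0,\qquad \kappa_-(B_0)=-\limsup_{t\to\infty}\frac{1}{t}\log\|1_\A\|_\A=0.
\]
Since condition (*) guarantees $|\kappa_+(A)|>0$, formula~\eqref{ell} yields $\ell=0$.

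Next I would verify the limit hypothesis of Theorem~\ref{sc1}. With $\ell=0$ the quotient in that hypothesis reduces to $\|B(x)-B_0\|_\A$, and by assumption $B\in\Hol(\DD,\A)$ is continuous at $x_0$ with $B(x_0)=B_0=0$, so
\[
\lim_{x\to x_0}\frac{\|B(x)-B_0\|_\A}{\|x-x_0\|_X^{\ell}}=\lim_{x\to x_0}\|B(x)\|_\A=0.
\]
All hypotheses of Theorem~\ref{sc1} are therefore satisfied, and the theorem supplies a mapping $M\in\Hol(\DD,\A_*)$, obtained as the locally uniform limit \eqref{limit1}, such that $\Gamma_t(x)=M(F_t(x))^{-1}e^{tB_0}M(x)$.

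Finally, substituting $B_0=0$ in this representation gives $e^{tB_0}=1_\A$, whence
\[
\Gamma_t(x)=M(F_t(x))^{-1}M(x),
\]
which exhibits $\{\Gamma_t\}_{t\ge0}$ as a coboundary in the sense discussed after Proposition~\ref{propo-1}; in particular it is linearizable (with trivial constant $B_0=0$). There is essentially no obstacle here --- the only point worth checking carefully is that the degenerate case $\ell=0$ is compatible with the proof of Theorem~\ref{sc1} (it is, since the required estimate~\eqref{est-B} then reduces to mere continuity of $B$ at $x_0$, and the exponent $\delta\kappa_+(A)$ driving convergence of the integral~\eqref{cc} remains strictly negative thanks to condition (*)).
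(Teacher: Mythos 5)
Your proposal is correct and follows exactly the paper's route: the paper obtains this corollary by specializing Theorem~\ref{sc1} (via the preceding corollary) to the case $B_0=B(x_0)=0$, where $\ell=0$, the limit hypothesis reduces to continuity of the generator $B$ at $x_0$, and $e^{tB_0}=1_\A$ turns the representation \eqref{rep} into a coboundary. Your added check that the case $\ell=0$ causes no trouble in the proof of Theorem~\ref{sc1} (holomorphy of $B$ even gives $\delta=1$ in \eqref{est-B}) is a sensible refinement of what the paper leaves implicit.
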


\medskip

We now present the main result of this paper which gives a necessary and sufficient condition for a semicocycle to be linearizable. Proposition~\ref{cr} above is a particular case of this assertion.

\begin{theorem}\label{th-present-Gamma1}
Assume that a semigroup $\mathcal{F}\subset\Hol(\DD)$ converges to a point $x_0\in\DD$ and $\left\{\Gamma_t\right\}_{t\ge0}\subset\Hol(\DD,\A)$ is a semicocycle over~$\Ff$. Denote $B_0=\left.\frac{d}{dt}\Gamma_t(x_0)\right|_{t=0}.$
    Then this semicocycle is linearizable if and only if there exists a
    holomorphic mapping $N:\DD\to\A$ with $N(x_0)=1_\A$ such that the limit
    \begin{equation}\label{limit}
\lim_{t\rightarrow \infty }e^{-tB_0}N(F_t(x))\Gamma_t(x) =:M(x)
    \end{equation}
    exists, uniformly on each subset strictly inside $\DD$. In this case the mapping $M$ defined by \eqref{limit} linearizes $\left\{\Gamma_t\right\}_{t\ge0}$.

    Assuming, moreover, that $\Ff$ satisfies condition (*), we can choose the mapping $N$ giving \eqref{limit} to be a polynomial mapping of degree not exceeding $\ell$ defined by \eqref{ell}.
\end{theorem}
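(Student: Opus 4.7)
The proof splits cleanly into three parts. The necessity direction is immediate: given a linearization $\Gamma_t(x) = M(F_t(x))^{-1} e^{tB_0} M(x)$, taking $N := M$ makes $e^{-tB_0} N(F_t(x)) \Gamma_t(x) \equiv M(x)$, so \eqref{limit} trivially converges to $M$.

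For sufficiency, assume \eqref{limit} holds locally uniformly. The limit $M$ is automatically holomorphic. At $x_0$ the cocycle property forces $\Gamma_t(x_0)$ to be a one-parameter semigroup in $\A$ with generator $B_0$, giving $\Gamma_t(x_0) = e^{tB_0}$ and hence $M(x_0) = 1_\A$. The key move is to evaluate \eqref{limit} at $F_s(x)$, apply the chain rule $\Gamma_{t+s}(x) = \Gamma_t(F_s(x)) \Gamma_s(x)$ (the inverse $\Gamma_s(x)^{-1}$ exists by Theorem~3.2 of \cite{EJK-multi}), and reparametrize $u = t+s$; this yields the functional equation $M(F_s(x))\,\Gamma_s(x) = e^{sB_0} M(x)$. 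Invertibility of $M(x)$ is then checked globally: $M$ is invertible in a neighborhood of $x_0$ by continuity, so for $s$ sufficiently large both $M(F_s(x))$ and $\Gamma_s(x)$ are invertible, forcing $M(x)$ invertible as well. Rearranging gives the representation \eqref{rep}.

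For the polynomial refinement under (*), I start from any linearizing $M \in \Hol(\DD,\A_*)$ (which exists by the equivalence just proved) and let $N$ be its Taylor polynomial at $x_0$ truncated at degree $n := \lfloor \ell \rfloor$, so $\deg N \le \ell$ and $N(x_0) = 1_\A$. Substituting $\Gamma_t(x) = M(F_t(x))^{-1} e^{tB_0} M(x)$ and setting $R(y) := N(y) M(y)^{-1} - 1_\A$---a holomorphic mapping vanishing to order $n+1$ at $x_0$---I obtain
\[
e^{-tB_0} N(F_t(x)) \Gamma_t(x) = M(x) + e^{-tB_0} R(F_t(x)) e^{tB_0} M(x),
\]
so the convergence reduces to $e^{-tB_0} R(F_t(x)) e^{tB_0} \to 0$ locally uniformly. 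Using $\|R(F_t(x))\|_\A = O(\|F_t(x)-x_0\|_X^{n+1})$ (valid once $F_t(x)$ enters a suitable neighborhood of $x_0$, which by (*) occurs uniformly on any strictly-inside $\DD^*$ for $t \ge t^*$), the decay rate $\|F_t(x)-x_0\|_X \le Ce^{t(\kappa_+(A)+\epsilon)}$, and the Lyapunov-index bounds on $\|e^{\pm tB_0}\|_\A$---exactly the toolkit already assembled in the proof of Theorem~\ref{sc1}---I bound the residual by $\exp\{t[(n+1)\kappa_+(A) + \kappa_+(B_0) - \kappa_-(B_0) + O(\epsilon)]\}$. The exponent is strictly negative because $n+1 > \ell$ by the choice of $n$, after taking $\epsilon$ small enough.

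The main obstacle is the sufficiency step: one must recognize that a single reparametrization of \eqref{limit}, combined with the cocycle identity, directly produces the functional equation characterizing linearization. After that, invertibility of $M$ is a continuity-at-$x_0$ argument, and the polynomial refinement is essentially a rerun of the Lyapunov-exponent estimate of Theorem~\ref{sc1}, now applied to the higher-order residual $R\circ F_t$ in place of $B\circ F_t - B_0$.
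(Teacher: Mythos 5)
Your proof of the equivalence is essentially the paper's: necessity by taking $N=M$, sufficiency by evaluating the limit at $F_s(x)$, using the chain rule and reparametrizing to get $M(F_s(x))\Gamma_s(x)=e^{sB_0}M(x)$, then deducing $M(x_0)=1_\A$ and propagating invertibility of $M$ from a neighborhood of $x_0$ via this functional equation. (The inverse $\Gamma_s(x)^{-1}$ is not actually needed to derive the functional equation, only for the invertibility step, but that is harmless.) For the polynomial refinement your grouping differs slightly from the paper's: you substitute the linearized form of $\Gamma_t$ and estimate $e^{-tB_0}R(F_t(x))e^{tB_0}M(x)$ with $R=NM^{-1}-1_\A$, using only the Lyapunov bounds on $e^{\pm tB_0}$; the paper instead estimates $e^{-tB_0}M_1(F_t(x))\Gamma_t(x)$ with $M_1=M-N$, invoking the uniform exponential bound $\|\Gamma_t(x)\|_\A\le L_3e^{t(\kappa_+(B_0)+\epsilon)}$ on strictly-inside subsets (Theorem~\ref{th-estim1}). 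Both give the exponent $(n+1-\ell)\kappa_+(A)+O(\epsilon)<0$. The one point you should make explicit: your residual carries the factor $M(x)$, so uniform convergence on a subset $\DD^*$ strictly inside $\DD$ requires $\sup_{x\in\DD^*}\|M(x)\|_\A<\infty$. In infinite dimensions this is not automatic for holomorphic mappings (the paper stresses exactly this pitfall in Section~2); it does hold here, via the identity $M(x)=e^{-t^*B_0}M(F_{t^*}(x))\Gamma_{t^*}(x)$ together with Theorem~\ref{th-estim1}, as in the proof of Theorem~\ref{propo-gener1}(ii) --- but it needs to be said. The paper's arrangement, which keeps $\Gamma_t(x)$ as the rightmost factor, avoids the issue entirely.
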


\begin{proof}
    Assume that a representation of the form \eqref{rep} exists, and set $N(x)=M(x)$. Then
    \begin{eqnarray*}
    &&e^{-tB_0}N(F_t(x))\Gamma_t(x)=
    e^{-tB_0}M(F_t(x))\Gamma_t(x) \\
    &=&  e^{-tB_0}M(F_t(x))M(F_t(x))^{-1}e^{tB_0}M(x)=
    M(x).
    \end{eqnarray*}

Conversely, assume that the limit \eqref{limit} exists, uniformly
on each subset  strictly inside $\DD$, and defines the mapping
$M$. Then, we have, by the uniform convergence, that $M$ is
holomorphic and satisfies
\begin{eqnarray*}
M(F_t(x))\Gamma_t(x)&=&\lim_{s\rightarrow \infty }e^{-sB_0}N(F_{s+t}(x))\Gamma_s(F_t(x))\Gamma_t(x)\\
&=&\lim_{s\rightarrow \infty }e^{-sB_0}N(F_{s+t}(x))\Gamma_{s+t}(x) \\
&=& e^{tB_0}\lim_{s\rightarrow \infty
}e^{-(s+t)B_0}N(F_{s+t}(x))\Gamma_{s+t}(x)=e^{tB_0}M(x),
\end{eqnarray*}
    so we have
\begin{equation}\label{e}M(F_t(x))
    \Gamma_t(x)=e^{tB_0}M(x).
\end{equation}
    We now show that $M(x)$ is invertible for all $x\in\DD$.
    Indeed,
\[
M(x_0)=\lim_{s\rightarrow \infty }e^{-sB_0}N(x_0)\Gamma_s(x_0)
    =\lim_{s\rightarrow \infty }e^{-sB_0}e^{sB_0}=1_\A.
\]
    Fixing $x$, we have
    $$\lim_{t\rightarrow \infty} M(F_t(x))=M(x_0)=1_\A,$$
    which implies that there exists $t_0$ so that $M(F_t(x))$ is
    invertible for $t\ge t_0$.
    From \eqref{e} we have
    $$M(x)=e^{-tB_0}M(F_t(x))\Gamma_t(x),$$
    and for $t\ge t_0$ the right-hand side is invertible, hence $M(x)$ is invertible.

    Therefore \eqref{e} can be re-written as
    $$\Gamma_t(x)=M(F_t(x))^{-1}e^{tB_0}M(x),$$
    and we have proved the existence of the required representation.

    Assume now that the semigroup $\Ff$ satisfies condition (*).
    Since $M$ is holomorphic, it is  bounded on some neighborhood $U$ of~$x_0$, strictly inside~$\DD$.
    In addition, it can be represented in a neighborhood of $x_0$ by a Taylor series, that is, a series of homogenous polynomials in $x-x_0$.
    We can write $M=P+M_1$, where $P$ consists of all homogenous polynomials of $x-x_0$
    of degree not greater than $\ell$ and $M_1$ contains all others. Note that the mapping $M_1=M-P$ is bounded on $U$. Therefore, there are $0<\delta\le 1$ and $L_1=L_1(U)$ such that
    $\displaystyle
    \|M_1(\tilde x)\|_\A\le L_1{\|\tilde x-x_0\|_X}^{\ell+\delta}$ whenever $x \in U$.

    For any $\epsilon>0$, one can find a constant $C>0$ and a neighborhood $U^*$ of~$x_0,\  U^*\subset U$, such that
    \[
    \|F_t(x)-x_0\|_X \le Ce^{t(\kappa_+(A) +\epsilon)}%\|x-x_0\|_X
    \quad \mbox{for all}\quad x\in U,\ t\ge0
    \]
    (see, for example, \cite{D-S}).
    Furthermore, for every subset $\DD^*$ strictly inside $\DD$ there is $t^*$ such that $F_t(x)\in U^*\subset U$ for all $t>t^*$ and $x\in\DD^*$.

    Therefore we have for all $x\in\DD^*$ and $t>0$,
    \[
    \|M_1\left(F_{t+t^*}(x)-x_0\right)\|_\A \le L_1{\|F_{t+t^*}(x)-x_0\|_X}^{\ell+\delta}
    \le L_1 C^{\ell+\delta}e^{t(\kappa_+(A) +\epsilon)(\ell+\delta)}\,.
    \]

    By \eqref{kappa-}, there is a constant     $L_2=L_2(\epsilon)$ such that $  \left\|e^{-tB_0}\right\|_\A \le L_2 e^{t(\epsilon-\kappa_-(B_0))}.$
    In addition, Theorem~\ref{th-estim1} asserts the existence of  $L_3=L_3(r,\epsilon)$  such that $ \left\|\Gamma_t(x)\right\|_\A \le L_3 e^{t(\kappa_+(B_0)+\epsilon)}$ for all $x\in\DD^*$.

    Combining all of these estimates, we conclude that for $t>t^*$,
    \begin{eqnarray*}
        && \left\|e^{-tB_0}M_1(F_t(x)-x_0)\Gamma_t(x)\right\|_\A  \le \\
        &\le& L_2
        e^{t(\epsilon-\kappa_-(B_0))} \cdot L_1 C^{{\ell+\delta}}e^{(t-t^*)(\kappa_+(A) +\epsilon)(\ell+\delta)} \cdot L_3  e^{t(\kappa_+(B_0)+\epsilon)} \\
        &=&\widetilde{C} e^{t((2+\ell+\delta)\epsilon + \delta \kappa_+(A) )}
        \,.
    \end{eqnarray*}

    Since $\delta \kappa_+(A)<0$, the last norm tends to zero as $t\to\infty$ for
    $\epsilon$ small enough. Thus
    \begin{eqnarray*}
        \lim_{t\rightarrow \infty }e^{-tB_0}P(F_t(x))\Gamma_t(x) &=&
        \lim_{t\rightarrow \infty }e^{-tB_0}\bigl(M(F_t(x)) -
        M_1(F_t(x)-x_0)\bigr)\Gamma_t(x) \\
        &=& \lim_{t\rightarrow \infty }e^{-tB_0}M(F_t(x))\Gamma_t(x)=M(x),
    \end{eqnarray*}
    so the desired result follows.
\end{proof}

To illustrate the last assertion of this theorem, let us return to Example~\ref{ex1}. We already know that the limit \eqref{limit} does not exist for $N$ being a polynomial of degree $0$ but it exists for $N=M$, which is a polynomial of degree $\ell=2\ge1$.

Furthermore, by this theorem if condition (*) holds, then $N$ in \eqref{limit} can be chosen to be a polynomial. It turns out that if condition (*) does not hold, it might happen that the linearizing mapping cannot be found by using uniform limits including polynomials.

\begin{example}\label{examp123}
Let  $X=c_{0}$ equipped  with the $\sup$-norm. Consider the semigroup
$\mathcal{F}=\{F_t\}_{t\ge0}$ on the open unit ball $\DD$ of $X$ defined
by
\[
F_t(x_1,x_2,x_3,...)=\left(e^{-t}x_1,e^{-t/2}x_2,e^{-t/3}x_3,... \right).
\]
Obviously, the convergence $F_t(x)\rightarrow 0$ is not uniform on
any ball centered at zero.
Denote $M(x)=\exp\left[
\sum_{k=1}^\infty \left(2x_k\right)^k \right]$. The semicocycle $\{\Gamma_t\}_{t\ge0}\subset\Hol(\DD,\C)$ defined
by $\Gamma_t(x)=M(F_t(x))^{-1}M(x)$ is linearizable by
construction. We wish to show that the mapping $M$ cannot be
obtained by limit~\eqref{limit} with  a polynomial mapping $N$.

Suppose on contrary that  $M(x)=\lim\limits_{t \to \infty}N(F_t(x))M(F_t(x))^{-1}M(x)$ for some polynomial $N$ and then
\begin{equation}\label{P-lin}
\lim\limits_{t \to \infty}N(F_t(x))M(F_t(x))^{-1}=1
\end{equation}
uniformly on every ball of radius less than $1$.
Since each polynomial is bounded on the unit ball of $X$, there
exists a constant $C>0$ such that
\begin{equation*}\label{P-bound}
|N(F_t(x))-1|\le C\|F_t(x)\|_X\le C\|x\|_X.
\end{equation*}
Take  the points $x^{(n)}$  defined by
\[
%{x^{(n)}}_{k}= \left\{
%\begin{array}{l}
%\frac{1}{2},\quad 0\leq k\leq n, \vspace{2mm} \\
%0,\quad k>n.
%\end{array}\right.
x^{(n)}  := \Bigl(   \underbrace{ \frac12,\ldots,\frac12}_{n\ \mbox{\small\rm times}},0,0,\ldots\Bigr).
\]
For these points we have $\|x^{(n)}\|_X=\frac{1}{2}$, and hence
$\left|N\left(F_t(x^{(n)})\right)-1\right|\leq \frac {1}{2}C$. In
addition, $ M\left(F_t(x^{(n)})\right)^{-1} =\exp\left[
-e^{-t}\sum_{k=1}^n \left(2x^{(n)}_k\right)^k
\right]=e^{-ne^{-t}}. $ Then
\[
\left|N\left(F_t(x^{(n)})\right)-1\right|\cdot
M\left(F_t(x^{(n)})\right)^{-1} \leq \frac {1}{2}C e^{-ne^{-t}} .
\]
It now follows by the triangle inequality that
\begin{equation*}\label{PM-triangle}
\left|M\left(F_t(x^{(n)})\right)^{-1}-1\right|-\left|N\left(F_t(x^{(n)})\right)M\left(F_t(x^{(n)})\right)^{-1}-1\right|\leq
\frac {1}{2}C e^{-ne^{-t}}.
\end{equation*}
Thus for every $t>0$, we have
\begin{eqnarray*}\label{PM-triangle}
&&\left|N\left(F_t(x^{(n)})\right)M\left(F_t(x^{(n)})\right)^{-1}-1\right|\geq
\left|e^{-ne^{-t}}-1\right| -\frac {1}{2}C e^{-ne^{-t} } \\
&&=1-e^{-ne^{-t}}\left(1+\frac {1}{2}C \right),
\end{eqnarray*}
which is larger than $\frac{1}{2}$ for $n$ sufficiently large. Consequently, the limit in \eqref{P-lin} is not uniform on the ball of radius $\frac {1}{2}$. Thus $M$ cannot be represented by the uniform limit \eqref{limit} with a polynomial
mapping $N$.
\end{example}

\medskip

Note now that the difference $\kappa_+(B_0)-\kappa_-(B_0)$ that appears in Theorems~\ref{sc1}--\ref{th-present-Gamma1} is the `horizontal width' of the spectrum of $B_0\in\A$ and is also equal to $\max\{\Re(\lambda-\lambda'):\ \lambda,\lambda'\in \sigma(B_0)\}$. Therefore it is natural to ask about a more refined condition for linearizability in terms of the spectrum itself. This was investigated in \cite{EJK}, in the case $X=\C$, where a sharp condition for linearizability was obtained in terms of the spectrum of $B_0$. The power-series method employed in \cite{EJK} can be extended to the more general framework considered here only under the restriction that the linear part of the semigroup generator at $x_0$ is a scalar operator.

    \begin{theorem}\label{th1}
Let a semigroup $\Ff$ satisfy condition (*) with ${f'(x_0)=-\omega\Id_X}$. Let $\{\Gamma_t\}_{t\geq 0}$ be a semicocycle over $\Ff$ generated by
$B\in\Hol(\DD,\A)$. If
    \[
k\omega\not\in   \sigma(B_0)-\sigma(B_0):=  \{\lambda-\lambda':\ \lambda,\lambda'\in \sigma(B_0)\}       \quad\mbox{for all}\quad k\in\N,
    \]
then the semicocycle $\{\Gamma_t\}_{t\ge0}$ is linearizable, that
is, it can be represented in the form
  \[
\Gamma_t(z)=M(F_t(z))^{-1}e^{tB_0}M(z),
  \]
and the  mapping $M\in\Hol(\DD,\A_*)$ is unique.
    \end{theorem}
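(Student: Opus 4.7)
The plan is to adapt the power-series method from \cite{EJK} to the operator-valued setting, exploiting the fact that $f'(x_0)=-\omega\Id_X$ is scalar so that at each Taylor degree the unknown $M$ satisfies a single algebraic equation in $\A$. After translating so that $x_0=0$, the desired identity $M(F_t(z))\Gamma_t(z)=e^{tB_0}M(z)$ is equivalent (by differentiating in $t$ at $t=0$ in one direction, and by uniqueness in Theorem~\ref{th-sol-cauchy} in the other) to the holomorphic equation
\begin{equation*}
M'(z)[f(z)] \;=\; B_0\,M(z) - M(z)\,B(z), \qquad M(0) = 1_\A,
\end{equation*}
together with the requirement $M(z)\in\A_*$ for every $z\in\DD$. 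So the problem reduces to solving this equation.

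I would then expand $M=\sum_{n\ge 0} M_n$, $B=B_0+\sum_{n\ge 1}B_n$, and $f=-\omega z+\sum_{n\ge 2}f_n$ in Taylor series around $0$, where $M_n, B_n, f_n$ are $n$-homogeneous polynomials. Using Euler's identity $M_n'(z)[z]=nM_n(z)$, the $n$-homogeneous part of the equation takes the form
\begin{equation*}
L_n(M_n) \;:=\; [B_0,M_n]+n\omega M_n \;=\; \Psi_n(M_0,\ldots,M_{n-1}),
\end{equation*}
where $\Psi_n$ is an explicit combination depending only on the lower-order $M_j$ and on the Taylor coefficients of $B$ and $f$ up to degree $n$. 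Pointwise in $z$ the operator $L_n$ acts on $\A$ as $\ad_{B_0}+n\omega\Id_\A$; since left- and right-multiplications by $B_0$ commute as bounded operators on $\A$, one has $\sigma(\ad_{B_0})\subseteq \sigma(B_0)-\sigma(B_0)$. Combined with the symmetry of $\sigma(B_0)-\sigma(B_0)$ about $0$, the hypothesis $k\omega\notin\sigma(B_0)-\sigma(B_0)$ gives $-n\omega\notin\sigma(\ad_{B_0})$ for every $n\ge 1$, so each $L_n$ is invertible and $M_n$ is uniquely determined by $M_0,\ldots,M_{n-1}$. This already yields uniqueness of the formal Taylor series at $0$.

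The main obstacle is convergence of $\sum_n M_n$. Here I would use the quantitative bound $\|L_n^{-1}\|\le C/n$ valid for all sufficiently large $n$ (since $n\omega$ eventually dominates the bounded operator $\ad_{B_0}$, with only finitely many residual indices requiring invertibility per se, which is precisely the spectral hypothesis) together with standard Cauchy estimates for $\|B_n\|$ and $\|f_n\|$ on a small ball around $0$. An induction on $n$ then yields geometric bounds of the form $\|M_n(z)\|\le K\rho^n$ on some ball $U\subset\DD$ centered at $0$; the extra $1/n$ factor supplied by $L_n^{-1}$ is exactly what compensates for the combinatorial growth of $\Psi_n$ arising from the convolution-type sums. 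Shrinking $U$ if necessary, one obtains a holomorphic $\A_*$-valued mapping $M$ on $U$ satisfying the equation, with $M(0)=1_\A$.

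Finally I would extend $M$ to all of $\DD$ by setting
\begin{equation*}
M(z)\;:=\;e^{-tB_0}\,M(F_t(z))\,\Gamma_t(z),
\end{equation*}
for any $t$ so large that $F_t(z)\in U$, which is possible for each $z\in\DD$ by condition~(*). The cocycle identity and the semigroup law ensure the right-hand side is independent of the chosen $t$, so $M$ is well-defined and holomorphic on $\DD$; invertibility of $\Gamma_t$ (see \cite[Theorem~3.2]{EJK-multi}) together with invertibility of $M(F_t(z))$ yields $M(z)\in\A_*$ throughout. Uniqueness on $\DD$ follows because any other normalized linearizing mapping has the same Taylor expansion at $0$, hence agrees with $M$ on $U$, and is then reproduced on $\DD$ by the same extension formula.
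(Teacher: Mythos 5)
Your proposal is correct and follows essentially the same route the paper indicates: the power-series method of Theorem~3.3 in \cite{EJK}, reducing linearization to the homological equations $(\ad_{B_0}+n\omega)M_n=\Psi_n$, whose solvability is exactly the non-resonance hypothesis via $\sigma(\ad_{B_0})\subseteq\sigma(B_0)-\sigma(B_0)$, followed by a majorant argument and propagation of the local solution along the semigroup orbits. The only (immaterial) difference is that the paper first applies the linearization model of \cite{E-R-S-04} to conjugate $\Ff$ to the linear semigroup $e^{-\omega t}\cdot$, which removes the $M_k'[f_j]$ terms from $\Psi_n$ and slightly simplifies the convergence estimates, whereas you work directly with the nonlinear generator.
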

The proof of this theorem is very similar to the proof of Theorem~3.3 in~\cite{EJK} and uses the linearization model for semigroups investigated in \cite{E-R-S-04}.

\medskip

When the operator $f'(x_0)$ is not scalar, obtaining sharp generic conditions for linearizability of semicocycles remains an open question.

\bigskip

%%%%%%%%%%%%%%%%%%%%%%%%%%%%%%%

\end{document}